\numberwithin{equation}{section}
\theoremstyle{definition}
\newtheorem{define}{Definition}[section]
\newtheorem{example}[define]{Example}
\theoremstyle{remark}
\newtheorem{remark}[define]{Remark}
\theoremstyle{plain}
\newtheorem{theo}[define]{Theorem}
\newtheorem{lemma}[define]{Lemma}
\newtheorem{prop}[define]{Proposition}
\newtheorem{cor}[define]{Corollary}
\newcommand{\C}{\mathscr C}
\newcommand{\D}{\mathscr D}
\newcommand{\F}{\mathscr F}
\newcommand{\E}{\mathscr E}
\newcommand{\basetopos}{\mathscr S}
\newcommand{\bb}{\mathbb}
\newcommand{\mfrk}{\mathfrak}
\newcommand{\set}{\mathbf{Set}}
\newcommand{\cat}{\mathbf{Cat}}
\newcommand{\sub}{\mathrm{Sub}}
\title{A Topos View of Blockchain Consensus Protocols}
\author{Michael Lambert}
\date{November 2021}
\begin{document}

\maketitle

\begin{abstract}
    This paper presents a reformulation in topos logic of a safety result arising in an abstract presentation of blockchain consensus protocols. That is, in a high-level template for ``correct-by-construction" consensus protocols, it is shown that a proposition and its negation cannot both be safe in protocol states that have executions to some common state. This is in fact true for any inconsistent propositions and the proof requires only intuitionistic reasoning. This opens the door for work on consensus protocols in the internal language of a topos. As a first pass on such a program, the main contribution of this paper is the formulation of estimate safety in abstract correct-by-construction protocols as a forcing statement in the internal logic of a given topos. This is illustrated first in the setting of copresheaf toposes. It is also seen there that safety can be viewed as a modal statement. For these interpretations, some extensions and adaptations of results in the literature on modal operators in toposes are presented. The final reformulation of estimate safety is a completely elementary version in the language of an arbitrary topos where it is seen that estimate safety is equivalent to a certain forcing statement.
\end{abstract}

\tableofcontents

\section{Introduction}

A \textbf{distributed system} is a network of interconnected nodes tasked with solving certain computational problems. A long-standing issue is that of desiging algorithms or protocols by which distributed systems arrive at a \textbf{consensus} about a state of affairs, given network latency or that some nodes may be faulty or inoperative. These faults could be due to mechanical errors or the presence of malicious actors.

A \textbf{blockchain} is a distributed system in which nodes validate blocks forming a public ledger of transactions. Any blockchain is ultimately a linear order of individual blocks, each of which identifies its unique immediate predecessor, contains an identification hash, and records network transaction data. The blockchain \textbf{protocol} governs how nodes communicate. As part of the protocol, a consensus algorithm for such a blockchain is a process by which nodes come to agree on which blocks to include in the chain in the case that blocks are minted simultaneously or contain conflicting data. For example, \cite{Nakamoto} outlines the consensus algorithm used in the Bitcoin blockchain; \cite{EthWhitePaper} is the Ethereum blockchain whitepaper from 2013. \textbf{Safety} is the issue of whether nodes will or will not validate conflicting blocks. A proposition about a configuration of the system or the consensus is \textbf{safe} in a protocol state if it is validated by all subsequent states accessible from the given state. A consensus algorithm should guarantee safety given that a certain percentage of nodes are not faulty. See \cite{Gramoli} for a recent overview of issues in consensus protocols and fault tolerance in blockchain development.

Here the concern is the mathematics of an abstract version of the consensus protocol used in the Ethereum blockchain. Ethereum follows a ``correct-by-construction'' (CBC) Byzantine fault tolerant (BFT) protocol outlined for example in \cite{ButerinGriffith}, \cite{Buterin}, \cite{VladCasper} and \cite{ZamAndFam}. An abstract template (referred to here as ``Abstract CBC'') for such protocols was given \cite{AbstractCBC}. The present purpose is not to extend this ongoing work, but rather to explicitly formulate Abstract CBC in topos logic and show that the central concept of ``estimate safety" is equivalently a forcing statement and indeed a modal statement arising from certain geometric models as in \cite{Steve}. To this end, review the main definition of the Abstract CBC template as presented in \cite{AbstractCBC}.

\begin{define} An \textbf{estimate consensus protocol} consists of
\begin{enumerate}
    \item a set $C$ of possible \textbf{consensus values};
    \item a propositional logic $\mathcal L_C$ such that each proposition is either true or not true of each consensus value;
    \item a category $\Sigma$ whose objects $w$ are \textbf{protocol states} and whose morphisms are \textbf{protocol executions};
    \item finally an \textbf{estimator function} $E\colon \Sigma_0 \to \mathcal L_C$ assigning a proposition to each object of $\Sigma$ in such a way that if $Ew \Rightarrow p$, then $\neg (Ew \Rightarrow \neg p)$ for any proposition $p$ and any state $w$.
\end{enumerate}
Display an estimate consensus protocol as a tuple $(C, \mathcal L_C, \Sigma, E)$.
\end{define}

\begin{define}
    A proposition $p$ \textbf{safe in the protocol state} $w$ if for any execution $w \to w'$, it follows that $Ew'\Rightarrow p$ holds. Write `$S(p, w)$' to indicate that $p$ is safe in $w$. States $w_1$ and $w_2$ are \textbf{compatible} if there is a third state $w_3$ and executions $w_1\to w_3 \leftarrow w_2$. Say that $w_3$ is a \textbf{common future state}.
\end{define}

The main result of \cite{AbstractCBC} is then the following.

\begin{theo}
    A proposition $p$ and its negation $\neg p$ are not both safe in compatible states. In notation, for any compatible $w_1$ and $w_2$ and any proposition $p$, it is not the case that both $S(p,w_1)$ and $S(\neg p, w_2)$.
\end{theo}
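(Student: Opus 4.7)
The plan is a short proof by contradiction that simply chains together the two definitions with the estimator-function axiom. Suppose, for the sake of contradiction, that $w_1$ and $w_2$ are compatible and that both $S(p, w_1)$ and $S(\neg p, w_2)$ hold. By compatibility, fix a common future state $w_3$ together with executions $w_1 \to w_3$ and $w_2 \to w_3$.

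Next, I would unfold the definition of safety against this particular witness. From $S(p, w_1)$ applied to the execution $w_1 \to w_3$, we obtain $Ew_3 \Rightarrow p$. Symmetrically, from $S(\neg p, w_2)$ applied to $w_2 \to w_3$, we obtain $Ew_3 \Rightarrow \neg p$. Thus the single estimate $Ew_3$ is forced to entail both $p$ and $\neg p$.

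Finally, I would invoke clause (4) of the definition of an estimate consensus protocol: whenever $Ew \Rightarrow p$, we have $\neg(Ew \Rightarrow \neg p)$. Applying this with $w = w_3$ to the conclusion $Ew_3 \Rightarrow p$ gives $\neg(Ew_3 \Rightarrow \neg p)$, directly contradicting the other conclusion $Ew_3 \Rightarrow \neg p$ derived above. This contradiction completes the argument.

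There is really no serious obstacle here; the content of the theorem is essentially a one-line consequence of the definitions once the common future state $w_3$ is produced. The only mild subtlety worth flagging, in view of the paper's stated interest in intuitionistic reasoning, is that the argument is constructive: it never invokes excluded middle or double negation elimination, using only modus ponens, the axiom in clause (4), and the compatibility hypothesis. This is what makes the result amenable to the later reformulation in the internal language of a topos.
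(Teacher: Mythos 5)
Your argument is correct, and it is the direct, unfactored version of what the paper does. The paper does not prove this statement in one step; instead it establishes a chain of lemmas --- Persistence (Lemma \ref{PersistenceLemma}), Forward Consistency (Lemma \ref{forwardconsistencylemma}), Current Consistency (Lemma \ref{currentconsistencylemma}), and Backward Consistency (Lemma \ref{backwardconsistencylemma}) --- and then assembles them, proving the more general Theorem \ref{THEOREM1} for any inconsistent pair $p\wedge q=\bot$. If you unwind that chain, the engine is the same as yours: both safety hypotheses are evaluated at the common future state $w_3$, and clause (4) of the estimator definition produces the clash there. What your route buys is brevity and the complete avoidance of the double-negation bookkeeping in the paper's Backward Consistency lemma ($\sim\sim S(\neg\neg p,w')$ and its contrapositives); what the paper's route buys is a set of reusable consistency lemmas that are re-proved later in the forcing/topos setting, plus an argument that visibly never leaves intuitionistic territory. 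Your closing remark about constructivity is accurate and worth keeping: although you phrase the proof as ``by contradiction,'' you are proving a negation $\sim(S(p,w_1)\,\&\,S(\neg p,w_2))$ by assuming the conjunction and deriving $\bot$, which is the intuitionistic negation-introduction rule and uses no excluded middle. Finally, note that your proof generalizes to the paper's Theorem \ref{THEOREM1} with one extra line: from $p\wedge q=\bot$ one gets $q\leq\neg p$, so $Ew_3\Rightarrow q$ yields $Ew_3\Rightarrow\neg p$ by transitivity, and the same contradiction follows.
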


This suggests a modest generalization, proved below as Theorem \ref{THEOREM1}.

\begin{theo}
    Inconsistent propositions are not both safe in compatible states. In notation, if $p\wedge q = \bot$ holds, then for any compatible states $w_1$ and $w_2$ it is not the case that both $S(p,w_1)$ and $S(q, w_2)$. 
\end{theo}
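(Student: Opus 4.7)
The plan is to mimic the original result from \cite{AbstractCBC} but exploit the hypothesis $p \wedge q = \bot$ to reduce the statement to the estimator-function constraint that already appears in the definition of an estimate consensus protocol. First I would argue by contradiction: suppose $S(p, w_1)$ and $S(q, w_2)$ both hold for compatible $w_1, w_2$, and let $w_3$ together with executions $w_1 \to w_3$ and $w_2 \to w_3$ witness that compatibility. Then the two safety assumptions specialize at $w_3$ to give the two entailments $Ew_3 \Rightarrow p$ and $Ew_3 \Rightarrow q$ in $\mathcal L_C$.

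Next I would convert the second entailment into one involving $\neg p$, which is the form that the estimator axiom is prepared to contradict. Intuitionistically, $p \wedge q = \bot$ is equivalent, by currying through the definition $\neg p := (p \Rightarrow \bot)$, to $q \Rightarrow \neg p$. Chaining with $Ew_3 \Rightarrow q$ yields $Ew_3 \Rightarrow \neg p$. Combining this with $Ew_3 \Rightarrow p$ and invoking condition (4) of the definition of an estimate consensus protocol—namely, that $Ew \Rightarrow p$ forces $\neg(Ew \Rightarrow \neg p)$—gives the desired contradiction.

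There is no serious obstacle here; the argument is a direct generalization and uses only the given estimator-function axiom together with the intuitionistically valid implication $p\wedge q = \bot \;\vdash\; q \Rightarrow \neg p$. The one point meriting care is to keep the reasoning intuitionistic, since a classical reduction (e.g., replacing $q$ by $\neg p$ via $q \Leftrightarrow \neg p$) would require further hypotheses on $\mathcal L_C$ that are not assumed. Phrasing the step as currying rather than contraposition keeps the proof valid in the internal logic of a topos, which is exactly the ambient setting the paper will later develop.
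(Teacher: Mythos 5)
Your argument is correct, and intuitionistically sound as you claim: assuming the conjunction and deriving absurdity is negation-introduction, not reductio of a negation, and the passage from $p\wedge q=\bot$ to $q\Rightarrow\neg p$ is exactly the adjunction $p\wedge q\leq\bot \equiv q\leq (p\Rightarrow\bot)$ (Lemma \ref{HeytingLemma2}). However, your route is genuinely different from the paper's. The paper (Theorem \ref{THEOREM1}) never instantiates both safety hypotheses at the common future state $w_3$ and never invokes the estimator axiom on the pair $(p,\neg p)$ at $w_3$ directly; instead it factors the argument through a chain of reusable lemmas --- Persistence (Lemma \ref{PersistenceLemma}), Forward Consistency (Lemma \ref{forwardconsistencylemma}), Current Consistency (Lemma \ref{currentconsistencylemma}), and Backward Consistency (Lemma \ref{backwardconsistencylemma}) --- the last of which requires a $\sim\sim$-introduction and two contrapositions, and applies the estimator axiom at $w_3$ to the pair $(\neg p,\neg\neg p)$. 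Your proof is shorter and makes the mechanism more transparent: the only thing that can go wrong at a common future is that the estimator there would have to entail two incompatible propositions at once. What the paper's longer route buys is the intermediate lemmas themselves, which are restated and reproved verbatim in the internal forcing language of \S\ref{section:ElementarySafetyInaTopos}, so the modular structure is doing expository work beyond this one theorem. One small shared caveat: your final step (and equally the paper's Current Consistency) concludes a contradiction from $(Ew_3\Rightarrow\neg p)=\top$ together with $\neg(Ew_3\Rightarrow\neg p)=\top$, which in the Heyting-valued setting presupposes $\top\neq\bot$; this nontriviality assumption is implicit in the paper as well, so it is not a defect of your proposal specifically, but it is worth flagging if you state the argument in full.
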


This generalization is proved using an intuitionistic logic $\mathcal L$ and by reasoning without double negation or DeMorgan's laws in the metatheory. From these intuitionistic proofs, the possibility arises of giving a topos-theoretic reformulation of consensus protocols. And indeed the main observations of this paper are that estimate safety is (1) equivalent to a forcing statement and (2) equivalent to a modal statement with a forcing semantics. As background, recall (e.g. \cite{Kripke}, \cite{KripkeModal}) that the traditional forcing semantics of a modal operator $\Box$ for ``necessity'' are $w\Vdash \Box p$ if, and only if, $v\Vdash p$ for all $w\leq v$. That is, ``necessarily $p$" is ``forced" at stage $w$ if it is also at all accessible future states. Now, ``safety in a state'' is inherently modal. Think of subsequent protocol states as \textbf{accessible future states}. Thinking of $C$ as possible configurations of the blockchain itself, the estimator $E$ returns a ``fork choice'' at a given protocol state, that is, a choice of which blocks to include and which to drop from the ledger. In this way, $p$ is safe at the protocol state if, and only if, $p$ is in the fork choice of every subsequent protocol state.

The question is how to make sense of these heuristics. A first approach is to use an adaptation of the relativised forcing relation `$\Vdash_*$' of \cite{Steve} induced from the canonical geometric morphism between presheaf toposes associated to the estimator $E\colon \Sigma\to PC$. This requires some technical results related to `$\Vdash_*$', such as Lemma \ref{lemma:SemanticsofTotalCosieve}, which are made as needed in \S \ref{section:ForcingSemanticsofEstimateSafety}. This leads to the first main result, Theorem \ref{theo:PresheafForcingSemanticsofSafety}, showing that estimate safety is equivalent to a $\Vdash_*$-forcing statement. In \cite{Steve} the presheaf semantics of modalized statements are given in terms of this relative forcing relation. Thus, along the way in \S \ref{section:GeometricModels}, some observations are made concerning $\Box$-semantics arising from ``geometric models'' as preparation for the second main result. Namely, Theorem \ref{cor:PresheafSafetyisModal} shows that estimate safety is also equivalent to a certain modal statement with semantics given by relative forcing.

Both of these preliminary results suffer from a defect. That is, for the semantics to work properly, it needs to be assumed that $E\colon \Sigma\to PC$ induces a surjective geometric morphism of copresheaf toposes, hence a geometric model. Since $PC$ is a poset category, this would require that $E$ is surjective on objects. This is probably an unreasonable assumption. For in \cite{VladCasper} and \cite{ZamAndFam}, the protocol $\Sigma$ is inductively \emph{derived from} transactions on the blockchain. This is a latency issue that is fundamental to blockchain development. So, not only would a completely elementary presentation of Abstract CBC protocols be preferred, but the use of geometric models, while it is intuitive and rather starkly illustrates the forcing and modal semantics, should ultimately be avoided in the settled account. The presheaf are left in because they explicitly illustrate the role of the ``total cosieve'' in Lemma \ref{lemma:SemanticsofTotalCosieve}. This is the basis of the elementary axiomatization and main forcing result. That is, \S \ref{section:ElementarySafetyInaTopos} gives a formulation of Abstract CBC protocols in the internal category theory of a topos. Safety is shown to be a forcing statement in Theorem \ref{theorem:ElementaryForcingSemanticsOfSafety} and the main safety result is proved in Theorem \ref{theoToposSafety}.

Results of \cite{Steve} are extended and adapted where needed. In particular, there are the following technical contributions to the theory of forcing semantic of modalized topos logic:
\begin{enumerate}
    \item Theorem \ref{theorem:BoxSemanticsElementaryVersion} extends the forcing semantics of `$\Box$' in \cite{Steve} from inclusions $|\C|\to \C$ to functors $\C\to\D$ that induce a surjection between copresheaf toposes.
    \item Lemma \ref{lemma:InternalTechnicalResult} shows that one of the main properties of forcing semantics can be internalized to general diagram categories in an arbitrary base topos.
    \item This allows a rewriting of Theorem \ref{theorem:BoxSemanticsElementaryVersion} to give forcing semantics of a modal operator induced by a geometric morphism of internal diagram categories in any topos. This appears as Theorem \ref{theorem:InternalForcingSemantics}.
\end{enumerate}
The following section \S \ref{section:HeytingAlgebras} gives an overview of Heyting algebras and some intuitionistic principles. Following that \S \ref{section:HeytingValuedSafety} gives an account of safety in a state is given using an intuitionistic system $\mathcal L$ in the place of the classical propositional system. Topos logic occupies the following section \S \ref{section:ToposesInternalLogic}. The final three sections give the final topos-theoretic account of Abstract CBC and the safety result, namely, Theorem \ref{theoToposSafety}. This paper is essentially an application of topos theory, but many details have been included in an effort to make it accessible for non-specialist readers interested in the mathematics of blockchain consensus protocols.

\textbf{Acknowledgements.} This paper was prepared under the supervision of Dr. Geoff Cruttwell at Mount Allison University where the author is a postdoctoral research associate. The author would like to thank Dr. Cruttwell for his support and feedback on an earlier version of the paper.

\section{Heyting Algebras}
\label{section:HeytingAlgebras}

Denote the top element of any lattice by `$\top$' and the bottom by `$\bot$'. Conjunction and disjunction are denoted by `$\wedge$' and `$\vee$', respectively. Most of the following is standard from \S I.8 of \cite{MM} or Chapter 1 of \cite{Handbook}. These details are included since the subobject classifier $\Omega$ in a topos is an internal Heyting algebra and thus obeys arrow-theoretic versions of the logical laws presented here. Throughout use `$\equiv$' as a metasymbol for ``if, and only if.''

\begin{define} 
    A \textbf{Heyting algebra} is a finitely complete lattice $\mathcal H$ where for each $p\in\mathcal H$, the functor $p\wedge -\colon \mathcal H \to\mathcal H$ has a right adjoint $p\Rightarrow (-)\colon \mathcal H\to\mathcal H$.
\end{define}

By the definition of adjoint functors, the Heyting implication satisfies
\begin{equation} \label{HeytingImplicationProperty} 
    p\wedge q\leq r \equiv p \leq q\Rightarrow r \equiv q\leq p\Rightarrow r.
\end{equation}
One of the main examples of a Heyting algebra is the set $\mathscr O(X)$ of open subsets of a topological space $X$. In this example, the implication $U\Rightarrow V$ is the union of open sets whose intersection with $U$ is contained in $V$.

\begin{lemma} \label{HeytingLemma1}
In any Heyting algebra the following hold:
\begin{enumerate}
    \item $[p\Rightarrow (q\wedge r)] = [(p\Rightarrow q)\wedge (p\Rightarrow r)]$
    \item $[(p\wedge q) \Rightarrow r] = [p\Rightarrow (q\Rightarrow r)]$
    \item $p\leq q$ if, and only if, $p\Rightarrow q = \top$
\end{enumerate}
\end{lemma}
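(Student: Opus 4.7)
The plan is to derive all three identities directly from the adjunction property \eqref{HeytingImplicationProperty}, using the ``Yoneda-style'' tactic of showing that two elements of the lattice are equal by showing they have the same lower sets. This works because in any poset $a = b$ iff for all $s$, $s \leq a$ iff $s \leq b$.

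For part (1), I would fix an arbitrary $s \in \mathcal H$ and run the equivalences
\[
s \leq p \Rightarrow (q \wedge r) \;\Leftrightarrow\; p \wedge s \leq q \wedge r \;\Leftrightarrow\; (p \wedge s \leq q) \text{ and } (p \wedge s \leq r),
\]
where the first step is the adjunction \eqref{HeytingImplicationProperty} and the second is the universal property of the meet. Each conjunct then transposes back across the adjunction to $s \leq p \Rightarrow q$ and $s \leq p \Rightarrow r$ respectively, so $s$ lies below the right-hand side iff it lies below $(p \Rightarrow q) \wedge (p \Rightarrow r)$. (This is essentially the observation that $p \Rightarrow (-)$, being a right adjoint, preserves binary meets, but spelled out in elementary form.)

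For part (2), I would again test against an arbitrary $s$ and apply \eqref{HeytingImplicationProperty} repeatedly together with associativity of $\wedge$:
\[
s \leq (p \wedge q) \Rightarrow r \;\Leftrightarrow\; (p \wedge q) \wedge s \leq r \;\Leftrightarrow\; p \wedge (q \wedge s) \leq r \;\Leftrightarrow\; q \wedge s \leq p \Rightarrow r \;\Leftrightarrow\; s \leq q \Rightarrow (p \Rightarrow r).
\]
This gives $(p\wedge q)\Rightarrow r = q\Rightarrow (p\Rightarrow r)$, and then commutativity of $\wedge$ (i.e.\ $p \wedge q = q \wedge p$) delivers the stated form $p \Rightarrow (q \Rightarrow r)$.

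For part (3), the key observation is that $p \Rightarrow q = \top$ is equivalent to $\top \leq p \Rightarrow q$, since $p \Rightarrow q \leq \top$ holds automatically. Applying \eqref{HeytingImplicationProperty} with $\top$ in the left-hand slot then yields $\top \leq p \Rightarrow q \Leftrightarrow \top \wedge p \leq q \Leftrightarrow p \leq q$, which closes the equivalence. No step here is a real obstacle; the only thing to be careful about is to invoke the adjunction in the correct direction and to remember that $\top$ is characterized by being the maximum element, so that the asymmetry ``$\top \leq x$ iff $x = \top$'' can be used.
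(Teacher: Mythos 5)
Your proposal is correct and follows essentially the same route as the paper: part (1) is the fact that $p\Rightarrow(-)$, as a right adjoint, preserves meets (you merely unfold this by testing against an arbitrary $s$), part (2) is the adjunction together with associativity of $\wedge$, and part (3) is the adjunction applied with $\top$ exactly as in the paper's one-line computation.
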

\begin{proof} 
    For the first statement, the functor $p\Rightarrow (-)$ is a right adjoint, hence preserves limits. For the second, note that products are associative. Finally, from the adjoint property \ref{HeytingImplicationProperty} above
        \[  p = \top\wedge p \leq q \equiv \top \leq p \Rightarrow q
        \]
    as required. 
\end{proof}

Denote by `$\neg p$' the element $p \Rightarrow \bot$. This is the \textbf{negation} or \textbf{pseudocomplement} of $p$. It is the largest element of $\mathcal H$ whose meet with $p$ is $\bot$.

\begin{lemma} \label{HeytingLemma2}
For any elements $p$ and $q$ in a Heyting algebra,
\begin{enumerate}
    \item $p\leq \neg \neg p$  
    \item $p\Rightarrow \neg\neg p = \top$
    \item $p\wedge \neg p =\bot$
    \item if $p\wedge q = \bot$ then $q\leq \neg p$.
\end{enumerate}
\end{lemma}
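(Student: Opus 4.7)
The plan is to work entirely from the adjoint characterization of Heyting implication in \eqref{HeytingImplicationProperty}, together with the identities already recorded in Lemma \ref{HeytingLemma1}. The items are most easily dispatched out of order: I would establish (3) first, use it to obtain (1) and (4), and then read off (2) from (1).

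For (3), unfolding the definition $\neg p = p\Rightarrow \bot$ and applying \eqref{HeytingImplicationProperty} with $q := \neg p$ and $r := \bot$ gives $p\wedge \neg p \leq \bot$; since $\bot$ is minimal this is an equality. For (4), the assumption $p\wedge q = \bot$ says $p\wedge q \leq \bot$, and \eqref{HeytingImplicationProperty} immediately transposes this to $q \leq p\Rightarrow \bot = \neg p$. So both (3) and (4) are essentially direct instances of the defining adjunction.

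Item (1) is then obtained by feeding (3) back into (4): the equation $\neg p \wedge p = \bot$ is precisely the hypothesis of (4) with $\neg p$ playing the role of $p$ and $p$ playing the role of $q$, and so the conclusion is $p \leq \neg(\neg p) = \neg\neg p$. Finally (2) follows from (1) together with part (3) of Lemma \ref{HeytingLemma1}, since that lemma says $p \leq \neg\neg p$ if, and only if, $p \Rightarrow \neg\neg p = \top$.

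There is no serious obstacle here; the only thing to be careful about is resisting the urge to invoke double-negation elimination, DeMorgan's laws, or any other classical move, none of which hold in a general Heyting algebra. Confining the argument to the adjunction \eqref{HeytingImplicationProperty} keeps the reasoning strictly intuitionistic, in keeping with the paper's standing emphasis on avoiding classical metatheoretic reasoning.
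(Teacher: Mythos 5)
Your proof is correct and follows essentially the same route as the paper's: everything is derived from the adjoint characterization \eqref{HeytingImplicationProperty}, with the identity $p\wedge\neg p=\bot$ serving as the key fact from which $p\leq\neg\neg p$ is transposed. The reordering of the items (establishing (3) and (4) first and then deducing (1) and (2)) is only a more explicit unfolding of the paper's terse argument, not a different method.
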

\begin{proof} 
    For the first statement, use the fact that $\bot = p\wedge (p\Rightarrow \bot)$ and the adjoint property \ref{HeytingImplicationProperty}. The rest follow from this statement and the equivalences in \ref{HeytingImplicationProperty}.
\end{proof}

In a given Heyting algebra, generally $p \vee \neg p = \top$ does not hold. Take as an example $\mathscr O(X)$, the frame of open sets of a topological space with $X = \mathbb R$. The pseudocomplement of $\mathbb R\setminus\lbrace 0\rbrace$ is empty, so the double pseudocomplement is $\mathbb R$. Likewise the inequality $\neg (p \wedge q) \leq \neg p \vee \neg q$ is generally strict, meaning that Heyting algebras do not in general satisfy both of the DeMorgan laws.

\begin{theo}
    In a Heyting algebra, the following are equivalent:
    \begin{enumerate}
        \item $\neg\neg p \leq p$
        \item $\neg\neg p = p$
        \item $p\vee \neg p = \top$
    \end{enumerate}
    Any Heyting algebra satisfying any of these is a \textbf{Boolean algebra}.
\end{theo}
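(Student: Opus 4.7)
The plan is to establish $(1) \Leftrightarrow (2)$ essentially for free from Lemma \ref{HeytingLemma2}(1), and then to close the circle by proving $(3) \Rightarrow (1)$ and $(1) \Rightarrow (3)$ separately. All three conditions should be read as universally quantified over $p \in \mathcal H$; this will matter for the implication $(1) \Rightarrow (3)$, where the hypothesis has to be reinstantiated at an element other than $p$ itself.

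The equivalence $(1) \Leftrightarrow (2)$ is immediate: the inequality $p \leq \neg\neg p$ from Lemma \ref{HeytingLemma2}(1) always holds, so (1) upgrades to equality and (2) trivially yields (1). For $(3) \Rightarrow (1)$ I would first record that $\mathcal H$ is distributive, since $p \wedge -$ is a left adjoint (to $p\Rightarrow -$) and so preserves binary joins in $\mathcal H$. Assuming $p \vee \neg p = \top$, distributivity gives $\neg\neg p = \neg\neg p \wedge (p \vee \neg p) = (\neg\neg p \wedge p) \vee (\neg\neg p \wedge \neg p)$, which collapses to $\neg\neg p \wedge p \leq p$ once the second disjunct is killed by Lemma \ref{HeytingLemma2}(3) applied to $\neg p$.

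For $(1) \Rightarrow (3)$ I would apply the hypothesis at $q := p \vee \neg p$. Antitonicity of $\neg$, together with $p, \neg p \leq q$, forces $\neg q \leq \neg p \wedge \neg\neg p$, and this meet is $\bot$ by Lemma \ref{HeytingLemma2}(3). Hence $\neg q = \bot$, so $\neg\neg q = \top$, and condition (1) reinstantiated at $q$ yields $q = \top$.

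The main obstacle is the sleight of hand in $(1) \Rightarrow (3)$: one must notice that the hypothesis is universal and reapply it to $p \vee \neg p$ rather than to $p$. Everything else reduces to routine use of Lemma \ref{HeytingLemma2} together with distributivity of the underlying lattice, which is worth recording explicitly as an ingredient of the argument.
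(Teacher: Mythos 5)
Your proof is correct, but note that the paper does not actually argue this theorem itself: it defers entirely to Proposition I.8.4 of \cite{MM}. So your write-up supplies the details the paper omits, and they check out. The decomposition --- $(1)\Leftrightarrow(2)$ for free from Lemma \ref{HeytingLemma2}(1), then $(3)\Rightarrow(1)$ via distributivity of the lattice (which, as you say, follows because $p\wedge-$ is a left adjoint and hence preserves joins), and $(1)\Rightarrow(3)$ by reinstantiating the hypothesis at $p\vee\neg p$ --- is sound. Two remarks. First, you invoke antitonicity of $\neg$ without proof; it is not among the recorded lemmas, though it follows in one line from the adjunction \ref{HeytingImplicationProperty}: if $a\leq b$ then $\neg b\wedge a\leq \neg b\wedge b=\bot$, so $\neg b\leq\neg a$. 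It would be worth stating. Second, your observation that the three conditions must be read as universally quantified over $p$ is genuinely substantive and is glossed over by the theorem as stated: for a fixed $p$ the implication $(2)\Rightarrow(3)$ fails, e.g.\ $p=(0,1)$ in $\mathscr O(\mathbb R)$ satisfies $\neg\neg p=p$ while $p\vee\neg p=\mathbb R\setminus\lbrace 0,1\rbrace\neq\top$. With the universal reading fixed, every step of your argument is valid.
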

\begin{proof} 
    There are standard arguments for these equivalences; see for example the proof of Proposition I.8.4 of \cite{MM}.
\end{proof}

Heyting algebras are algebraic models of systems of propositional logic that do not obey double negation. In this sense, Heyting algebras are models of ``non-classical'' logic that is also said to be ``intuitionistic.'' Throughout given proofs will avoid using classical reasoning wherever possible. To make these argument easier to read, use some meta-language symbols: `$\supset$' stands for material implication, `$\equiv$' is biequivalence or biconditional, $\&$ is conjunction, and `$\sim$' stands for negation. Given statements $P$ and $Q$, use freely various principles of intuitionistic reasoning: namely, from $P\supset Q$ infer that $\sim Q\supset \sim P$ (contraposition); from $P$ infer $\sim\sim P$; finally that $P \supset (Q\supset \bot)$ will hold if, and only if, to $P\& Q\supset \bot$.

\section{Estimate Safety for Heyting-Valued Consensus Protocols}
\label{section:HeytingValuedSafety}

The main definition of the Abstract CBC template in \cite{AbstractCBC} can now be phrased in terms of an arbitrary Heyting algebra.

\begin{define} \label{define:HeytingValuedEstimateConsensusProtocol}
    A \textbf{Heyting-valued estimate consensus protocol} consists of
        \begin{enumerate}
            \item a set $C$ of the possible \textbf{consensus values};
            \item a Heyting algebra $\mathcal H_C$;
            \item a category $\Sigma$ whose objects $w$ are \textbf{protocol states} and whose morphisms are \textbf{protocol executions};
            \item finally an \textbf{estimator functor} $E\colon \Sigma \to \mathcal H_C$ assigning a proposition to each object of $\Sigma$ in such a way that if $Ew \Rightarrow p =\top$, then $\neg (Ew \Rightarrow \neg p)=\top$ for any proposition $p$ and any state $w$.
        \end{enumerate}
    Display a Heyting-valued estimate safety consensus protocol as $(C, \mathcal H_C, \Sigma, E)$. For the most part, work will be done for $\mathcal{H}_C = PC$, the powerset of $C$. For emphasis, call this a \textbf{Boolean-valued estimate consensus protocol}. However, in the abstract setting $PC$, will be an internally complete Heyting algebra.
\end{define}

\begin{define}
    A proposition $p$ has \textbf{estimate safety in the protocol state} $w$ if for any execution $w \to w'$, it follows that $Ew'\Rightarrow p = \top$ holds. Write `$S(p, w)$' to indicate that $p$ is safe in $w$. States $w_1$ and $w_2$ are \textbf{compatible} if there is a third state $w_3$ and executions $w_1\to w_3 \leftarrow w_2$.
\end{define}

With the definitions stated, the results of \cite{AbstractCBC} can be reproved in the present framework. Note throughout that neither double negation nor DeMorgan are required.

\begin{lemma}[Persistence Lemma] \label{PersistenceLemma}
    If $p\Rightarrow q = \top$, then $S(p,w )$ implies $S(q,w)$ for any state $w$.
\end{lemma}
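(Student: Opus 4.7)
The plan is to unpack the definition of estimate safety and reduce the claim to transitivity of $\leq$ in the Heyting algebra $\mathcal{H}_C$, via Lemma \ref{HeytingLemma1}(3). Specifically, I would fix an arbitrary execution $w \to w'$ from the state $w$ and aim to show $Ew' \Rightarrow q = \top$; since the execution was arbitrary, this gives $S(q, w)$ directly from the definition.

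First I would rewrite both hypotheses as inequalities. By assumption $p \Rightarrow q = \top$, and by $S(p, w)$ applied to the chosen $w \to w'$ we have $Ew' \Rightarrow p = \top$. Invoking Lemma \ref{HeytingLemma1}(3) in both directions converts these to $p \leq q$ and $Ew' \leq p$, respectively. Transitivity of the partial order then yields $Ew' \leq q$, and applying Lemma \ref{HeytingLemma1}(3) once more (in the other direction) gives $Ew' \Rightarrow q = \top$, as required.

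There is essentially no obstacle here: the content of the lemma is simply that $\leq$ is transitive once implication-equals-$\top$ is translated into the lattice order. If one preferred an argument staying entirely at the level of Heyting implication (without passing through $\leq$), a slight variant is to note that $(Ew' \Rightarrow p) \wedge (p \Rightarrow q) \leq Ew' \Rightarrow q$ holds in any Heyting algebra (a form of transitivity of $\Rightarrow$ derivable from the adjoint property \ref{HeytingImplicationProperty}), so if both antecedents are $\top$ then so is the consequent. Either route completes the proof in a few lines without using double negation or the DeMorgan laws, in keeping with the intuitionistic stance emphasized in the preceding section.
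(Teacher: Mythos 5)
Your proof is correct and follows essentially the same route as the paper's: convert both hypotheses to inequalities via Lemma \ref{HeytingLemma1}(3), chain $Ew' \leq p \leq q$ by transitivity, and convert back. The alternative purely implication-level variant you sketch is a fine aside but adds nothing beyond the paper's argument.
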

\begin{proof} 
    Take an execution $w \to w'$ and assume that $Ew' \Rightarrow p = \top$. Then by Lemma \ref{HeytingLemma1}, equivalently $Ew' \leq p\leq q$ and therefore by transitivity of `$\leq$' and the same lemma $Ew' \Rightarrow q = \top$, as required.
\end{proof}

\begin{lemma}[Forward Consistency] \label{forwardconsistencylemma} 
    For any execution $w \to w'$, if $p$ is safe in $w$, then $p$ is safe in $w'$. That is, if $S(p,w)$ then $S(p,w')$.
\end{lemma}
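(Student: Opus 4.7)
The plan is to exploit the fact that $\Sigma$ is a \emph{category}, so that executions compose. This is the only structural feature of $\Sigma$ that makes ``subsequent states'' behave well enough for safety to propagate forward.

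Concretely, fix an execution $f\colon w \to w'$ and assume $S(p,w)$. To establish $S(p,w')$, I would take an arbitrary execution $g\colon w' \to w''$ and aim to show $Ew'' \Rightarrow p = \top$. The key step is to form the composite $g\circ f \colon w \to w''$ in $\Sigma$, which is again an execution by definition of a category. Then applying the hypothesis $S(p,w)$ to this composite execution yields $Ew'' \Rightarrow p = \top$, as desired. Since $g$ was arbitrary, this establishes safety of $p$ at $w'$.

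There is really no obstacle here beyond unwinding definitions: the proof does not touch the Heyting structure at all, and in particular needs neither Lemma \ref{HeytingLemma1} nor any intuitionistic reasoning in the metatheory. The content is purely the associativity/closure of composition in $\Sigma$. If anything, the only thing to be careful about is not to conflate the original execution $w \to w'$ with the arbitrary witness $w' \to w''$ used to test safety at $w'$; keeping these notationally distinct makes the argument transparent.
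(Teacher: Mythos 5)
Your argument is exactly the paper's proof: take an arbitrary execution $w'\to w''$, compose with $w\to w'$ to obtain an execution $w\to w''$, and apply $S(p,w)$ to conclude $Ew''\Rightarrow p = \top$. Your observation that only composition in $\Sigma$ is needed (and no Heyting-algebra reasoning) is also accurate.
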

\begin{proof} 
    Take any execution $w'\to w''$. Compose to get one $w\to w''$. Then by the hypothesis that $p$ is safe in $w$, the implication $Ew''\Rightarrow p$ holds. 
\end{proof}

\begin{lemma}[Current Consistency] \label{currentconsistencylemma} 
    If $p$ is safe in $w$, then $\neg p$ is not safe in $w$. That is, if $S(p,w)$ then $\sim S(\neg p,w)$.
\end{lemma}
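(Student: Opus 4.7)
The plan is to assume both $S(p,w)$ and $S(\neg p,w)$ and derive $\bot$ in the metatheory; this yields the required $\sim S(\neg p,w)$ intuitionistically, since it is the valid pattern for proving negations and uses no double-negation elimination.

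The key structural observation is that $\Sigma$ is a \emph{category}, so for every state $w$ there is the identity execution $1_w\colon w\to w$. Specializing the universal quantifier in the safety predicate to this execution reduces both safety hypotheses to statements about $Ew$ alone. Namely, $S(p,w)$ forces $Ew\Rightarrow p = \top$, and $S(\neg p,w)$ forces $Ew\Rightarrow \neg p = \top$.

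Next, apply the constitutive axiom of the estimator from Definition \ref{define:HeytingValuedEstimateConsensusProtocol}(4) to the first of these: from $Ew\Rightarrow p = \top$ conclude $\neg(Ew\Rightarrow \neg p) = \top$. Writing $x := Ew\Rightarrow \neg p$, we have thus established both $x = \top$ and $\neg x = \top$. Lemma \ref{HeytingLemma2}(3) gives $x \wedge \neg x = \bot$, while the assumptions give $x \wedge \neg x = \top \wedge \top = \top$. Hence $\top = \bot$, which is the required absurdity (assuming the Heyting algebra is nondegenerate; otherwise the statement is vacuous).

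I do not expect any real obstacle. The one step whose motivation is not immediate from the algebra is the use of $1_w$ to collapse the ``for every execution out of $w$'' clause to a condition at $w$ itself; once that is spotted, the axiom on $E$ and Lemma \ref{HeytingLemma2}(3) close the argument mechanically. The main thing to be careful about is that the proof avoids classical principles: only the adjoint characterization \eqref{HeytingImplicationProperty} of Heyting implication and the definition $\neg x := x\Rightarrow \bot$ are used, in keeping with the intuitionistic stance of \S\ref{section:HeytingAlgebras}.
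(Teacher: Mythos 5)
Your proof is correct and follows essentially the same route as the paper's: both reduce the safety hypotheses to facts about $Ew$ via the identity execution $1_w$, invoke the estimator axiom to get $\neg(Ew\Rightarrow\neg p)=\top$, and conclude from the incompatibility of $x=\top$ with $\neg x=\top$. Your explicit remark that nondegeneracy of the Heyting algebra is needed to separate $\top$ from $\bot$ is a point the paper leaves tacit, but it does not change the argument.
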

\begin{proof} 
    If $S(\neg p, w)$ holds, then in particular $Ew \Rightarrow \neg p$ does too. Consequently, if $\neg(Ew\Rightarrow \neg p)$ holds, then not $S(\neg p, w)$. In general if $Ew \Rightarrow p=\top$, then $\neg(Ew\Rightarrow \neg p)\top$ by assumption on $E$. Therefore, since $S(p,w)$ implies in particular that $Ew\Rightarrow p$, the conclusion follows.
 \end{proof}

\begin{lemma}[Backward Consistency] \label{backwardconsistencylemma} 
    For all executions $w \to w'$, if $p$ is safe in $w'$, then $\neg p$ is not safe in $w$. That is, if $S(p,w')$ holds, then $\sim S(\neg p,w)$.
\end{lemma}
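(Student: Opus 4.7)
The plan is to derive Backward Consistency by chaining Forward Consistency (Lemma \ref{forwardconsistencylemma}) with Current Consistency (Lemma \ref{currentconsistencylemma}), using only contraposition so that the argument remains intuitionistically valid. Concretely, fix an execution $w \to w'$ and the proposition $p$.

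First I would apply Forward Consistency to the proposition $\neg p$ along the given execution $w \to w'$, obtaining the implication $S(\neg p, w) \supset S(\neg p, w')$. By contraposition this yields $\sim S(\neg p, w') \supset \sim S(\neg p, w)$; this is one of the intuitionistic principles flagged at the end of \S \ref{section:HeytingAlgebras}, so no appeal to double negation is needed.

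Next I would apply Current Consistency at the state $w'$ to get $S(p, w') \supset \sim S(\neg p, w')$. Chaining this with the contrapositive above gives $S(p, w') \supset \sim S(\neg p, w)$, which is exactly the desired statement. The argument is essentially a two-line composition, so there is no real obstacle; the only thing to be mindful of is that we must route through the contrapositive of Forward Consistency rather than trying to derive a positive statement about $\neg p$ at $w$, since the latter would implicitly invoke classical reasoning.
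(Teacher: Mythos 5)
Your proof is correct, and it is actually a cleaner route than the one the paper takes, though both rest on the same two ingredients. The paper first chains Forward Consistency with Current Consistency \emph{applied to $\neg p$ at $w'$}, obtaining $S(\neg p, w) \supset S(\neg p, w') \supset \sim S(\neg\neg p, w')$, contraposes the whole chain to get $\sim\sim S(\neg\neg p, w') \supset \sim S(\neg p, w)$, and then must bridge from $S(p,w')$ to $\sim\sim S(\neg\neg p, w')$ via $S(p,w') \supset S(\neg\neg p, w')$ (which invokes the Persistence Lemma \ref{PersistenceLemma} together with $p \leq \neg\neg p$ from Lemma \ref{HeytingLemma2}) followed by double-negation introduction. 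You instead apply Current Consistency to $p$ itself at $w'$, giving $S(p,w') \supset \sim S(\neg p, w')$ directly, and contrapose only Forward Consistency for $\neg p$ to get $\sim S(\neg p, w') \supset \sim S(\neg p, w)$; composing these yields the claim in two steps. Your version avoids the detour through $\neg\neg p$, the appeal to Persistence, and the double-negation introduction, while using only contraposition of a proved implication --- one of the intuitionistic principles explicitly licensed at the end of \S \ref{section:HeytingAlgebras} --- so it is every bit as constructively sound as the paper's argument, and somewhat more economical.
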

\begin{proof} 
    By Lemmas \ref{forwardconsistencylemma} and \ref{currentconsistencylemma}, there are valid implications
        \[ S(\neg p, w) \supset S(\neg p,w') \supset \sim S(\neg\neg p,w').
        \] 
    Therefore, by contraposition, 
        \[ \sim \sim S(\neg \neg p, w')\supset \sim S(\neg p,w).
        \]  
    But note that 
        \[ S(p,w')\supset S(\neg\neg p, w') \supset\sim\sim S(\neg \neg p, w')
        \]
    always holds intuitionistically. Putting together the last and penultimate lines, the desired result follows. 
\end{proof}

\begin{theo} \label{THEOREM1}
    Contradictory propositions are not safe at compatible states. That is, if $p\wedge q = \bot$ and $w_1\simeq w_2$ both hold, then $\neg (S(p,w_1)\wedge (S(q, w_2))$ holds. 
\end{theo}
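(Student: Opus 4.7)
The plan is to assume both $S(p,w_1)$ and $S(q,w_2)$ for contradiction and derive $\bot$ at a common future state $w_3$, using only the previously established consistency and persistence lemmas together with the basic Heyting-algebra facts from Lemma \ref{HeytingLemma2}. Since the ambient logic is intuitionistic, the whole argument has to be arranged so that each step is a direct implication rather than a double-negation elimination; the conclusion is negative, which is convenient.

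First, I would unpack compatibility: choose a common future state $w_3$ together with executions $w_1 \to w_3$ and $w_2 \to w_3$. Applying Forward Consistency (Lemma \ref{forwardconsistencylemma}) to each, the assumptions $S(p,w_1)$ and $S(q,w_2)$ transport forward to $S(p,w_3)$ and $S(q,w_3)$. This reduces the problem to showing that $p$ and $q$ cannot both be safe at a single state when $p\wedge q = \bot$.

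Next, I would translate the hypothesis $p\wedge q = \bot$ into a Heyting implication usable by the Persistence Lemma. By part (4) of Lemma \ref{HeytingLemma2} applied to the pair $(q,p)$, from $p\wedge q = \bot$ we get $p \leq \neg q$, and then part (3) of Lemma \ref{HeytingLemma1} yields $p \Rightarrow \neg q = \top$. Now the Persistence Lemma (Lemma \ref{PersistenceLemma}) applied at $w_3$ upgrades $S(p, w_3)$ to $S(\neg q, w_3)$. At the same time, Current Consistency (Lemma \ref{currentconsistencylemma}) applied to $S(q, w_3)$ gives $\sim S(\neg q, w_3)$. These two statements contradict each other, so the assumption $S(p,w_1) \wedge S(q,w_2)$ is untenable, which is exactly the claim $\neg(S(p,w_1)\wedge S(q,w_2))$.

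I do not anticipate a genuine obstacle here; the only subtle point is that I must avoid appealing to $\neg\neg \alpha \supset \alpha$ or to the missing DeMorgan direction, since the whole appeal of the generalization in Theorem \ref{THEOREM1} over the original result of \cite{AbstractCBC} is that it goes through intuitionistically. The symmetric choice to rewrite $p\wedge q = \bot$ as $p \leq \neg q$ (rather than $q\leq\neg p$) is deliberate: it lets me feed $S(p, w_3)$ through Persistence to land on the negation of $q$, so that the collision with $S(q,w_3)$ is produced by a single application of Current Consistency, with no contrapositive step needed.
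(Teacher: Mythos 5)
Your argument is correct and fully intuitionistic, but it takes a genuinely different route from the paper's proof. The paper argues asymmetrically: it pushes $S(p,w_1)$ forward to $w_3$, then uses Backward Consistency (Lemma \ref{backwardconsistencylemma}) to reflect back down to $w_2$ and obtain $\sim S(\neg p, w_2)$, and finally combines this with the Persistence Lemma applied to $q\Rightarrow\neg p$ and a contrapositive step to conclude $S(p,w_1)\supset \sim S(q,w_2)$. You instead push \emph{both} hypotheses forward to the common future state $w_3$ via Forward Consistency (Lemma \ref{forwardconsistencylemma}), rewrite $p\wedge q=\bot$ as $p\Rightarrow\neg q=\top$, and let Persistence and Current Consistency (Lemmas \ref{PersistenceLemma} and \ref{currentconsistencylemma}) collide at $w_3$: $S(\neg q,w_3)$ against $\sim S(\neg q,w_3)$. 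Each step is sound --- your appeal to Lemma \ref{HeytingLemma2}(4) with the roles of $p$ and $q$ swapped is legitimate since $\wedge$ is commutative, and concluding $\sim(S(p,w_1)\,\&\,S(q,w_2))$ by deriving $\bot$ from the conjunction is negation introduction, not double-negation elimination. What your route buys is economy: it bypasses Backward Consistency entirely, and with it the $\neg\neg$-bookkeeping in that lemma's proof, and it makes the symmetry between $p$ and $q$ manifest. What the paper's route buys is that it exercises the full suite of consistency lemmas inherited from \cite{AbstractCBC} and yields the sharper intermediate implication $S(p,w_1)\supset\sim S(q,w_2)$, locating the refutation of $q$'s safety at $w_2$ itself rather than only at the common future.
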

\begin{proof} 
    By Lemma \ref{HeytingLemma2}, $p\wedge q = \bot$ is equivalent to $q\leq \neg p$ which is equivalent to $q\Rightarrow \neg p=\top$. Now, by forward consistency and backward consistency, the implications
        \begin{equation} \label{equation1}
            S(p, w_1)\supset S(p, w_3) \supset \sim S(\neg p,w_2)
        \end{equation} 
    hold. By Lemma \ref{PersistenceLemma} applied to $q\Rightarrow \neg p$,
        \[ 
            S(q, w_2)\supset S(\neg p, w_2)
        \]
    and intuitionistic contraposition,
        \begin{equation} \label{equation2}
            \sim S(\neg p, w_2) \supset \sim S(q, w_2).
        \end{equation} 
    Therefore, putting together the implications \ref{equation1} and \ref{equation2},
        \[ 
            S(p, w_1) \supset \sim S(q, w_2)
        \]
    holds. But, by definition of negation, the last display is equivalently
        \[  
            S(p, w_1) \supset (S(q, w_2) \supset \bot)
        \]
    which is intuitionistically equivalent to 
        \[ 
            (S(p, w_1) \, \& \, S(q, w_2)) \supset \bot
        \]
    by Lemma \ref{HeytingLemma1} that is, to $\sim (S(p, w_1) \, \& \, S(q, w_2))$, as required.
\end{proof}

\section{Toposes and Their Internal Logic}
\label{section:ToposesInternalLogic}

The forcing interpretation of safety in a topos requires some background and notation on topos theory. References are the standard ones, such as \cite{MM} and \cite{ElephantI}. 

Recall that a \textbf{subobject classifier} in a finitely-complete category $\E$ is a morphism $\top\colon  1\to \Omega$ such that for any subobject $m\colon S\to X$ there is a unique characteristic map $\chi_m\colon X\to \Omega$ making a pullback
$$\xymatrix{
    S \ar[d]_m \ar[r] & 1 \ar[d]^\top \\
    X \ar[r]_{\chi_m} & \Omega.
}$$
In non-elementary terms, this is to say that $\sub(-)\colon \E^{op}\to\set$ is representable, that is, pulling back along $X\to\Omega$ induces isomorphisms $\sub(X)\cong \E(X,\Omega)$ holding naturally in $X$. A morphism $X\to \Omega$ in a topos is a \textbf{propositon} ranging over the elements of $X$. A special proposition is the composite of the unique arrow $X\to 1$ and $\top \colon 1\to \Omega$ denoted throughout by `$\top_X$'. Think of this as ``true with respect to $X$.''

\begin{define}
A \textbf{topos} is a finitely complete category $\E$ with power objects and a subobject classifier. 
\end{define}

\begin{example}
    The category of sets $\set$, the category of finite sets $\mathbf{Fin}$, any presheaf category $[\C^{op},\set]$, any sheaf category $\mathbf{Sh}(\C,J)$, and any arrow category $\E^\mathbf 2$ on a topos $\E$ are all toposes.The two-element set $\mathbf 2 = \lbrace 0,1\rbrace$ is a subobject classifier in $\set$. The presheaf $\C^{op}\to \set$ making the assignment 
        \[
            C\mapsto \lbrace \text{sieves on } C\rbrace
        \]
    is a subobject classifier in the presheaf category $[\C^{op},\set]$. The top element of $\Omega(C)$ is the so-called ``total sieve'' on $\C$, namely, the collection $\mfrk t_C = \lbrace f\colon D\to C\rbrace$ consisting of all the arrows of $\C$ with codomain $C$. The classifying arrow $\chi$ associated to a subobject $S\to X$ in $[\C^{op},\set]$ has as its components $\chi_C\colon XC\to \Omega C$ the functions
        \begin{equation}
            x \mapsto \lbrace f\colon D\to C \mid f^*x\in SD\rbrace
        \end{equation}
    where $f^*\colon FD\to FC$ denotes the associated transition function $F(f) = f^*$.
    \end{example}

The \textbf{power object} $PX$ of a topos object $X$ comes with a \textbf{membership} morphism $\in_X \colon  X\times PX \to \Omega$ having the universal property that for any morphism $g\colon X\times Y\to \Omega$, there is a unique \textbf{transpose} $\hat g\colon Y\to PX$ such that $g = \in_X (1\times \hat g)$ holds. Consequently, propositions $p\colon X\to\Omega$ are in bijection with global elements $\hat p\colon 1\to PX$. Each such power object is an internal frame. For each subobject lattice $\sub(C)$ is one and by the naturality of the isomorphisms
    \[
        \sub(C\times X)\cong \E(X,PC)
    \]
the Heyting operations on $\sub(C\times X)$ induce operations on $PC$ by Yoneda. The internal ordering relation $(\leq)$ can be described as the equalizer
$$\xymatrix{
    (\leq) \ar[r] & PX\times PX \ar@<.5ex>[r]^-{\pi_1} \ar@<-.5ex>[r]_-\wedge & PX.
}$$
Write `$f\leq g$' if the pair $\langle f,g\rangle$ factors through $(\leq)$. For the special case of $\Omega = P1$, the isomorphism above reduces to 
    \[
        \sub(X)\cong \E(X,\Omega)
    \]
which is the unique frame isomorphism making $\Omega$ an internal frame. The classifying map of the internal order object $(\leq)\to \Omega\times\Omega\to \Omega$ is the implication operator $\Rightarrow\colon \Omega\times\Omega \to \Omega$. 

\section{Geometric Models}
\label{section:GeometricModels}

Morphisms of toposes give rise to one version of the forcing semantics presented here. First recall the standard definition and some conventions.

\begin{define}
    A \textbf{geometric morphism} $F\colon \F\to \E$ between toposes is a pair of adjoint functors $F^*\dashv F_*$ where $F^*$ is finite limit preserving. Call $F^*\colon \E\to \F$ the \textbf{inverse image} and $F_*\colon \F\to\E$ the \textbf{direct image}.
\end{define}

Conventionally geometric morphisms point in the direction of their direct image. For any geometric morphism $F\colon \F\to\E$, denote by $A^*$ the action of $F^*$ on $A\in\E$, namely, $F^*A=A^*$. Similarly, $X_*$ for $X\in\F$ denotes $F_*X$ in $\E$. Denote by $\overline f$ the \textbf{transpose} in $\F$ of a morphism $f\colon A \to X_*$, that is, $\overline f = \epsilon f^*$. Likewise, $\hat g$ denotes the transpose in $\E$  of a morphism $g\colon A^*\to X$ in $\F$, namely, $g_*\eta$.

\begin{example}
Order-preserving morphisms $f\colon P \rightleftarrows Q:g$ between posets internal to a topos are \textbf{internally adjoint} with $f\dashv g$ if $fg\leq 1$ and $1\leq gf$ both hold. Since $P\colon \E^{op}\to\E$ is a functor, each arrow $f\colon A\to B$ induces one $Pf\colon PB\to PA$. Each such arrow has an internal left adjoint $\exists_f$ and an internal right adjoint $\forall_f$, similiarly induced from external adjoints on subobject lattices by the Yoneda isomorphism. In the special case of the morphism $C\to 1$ this situation is summarized by 
    $$\xymatrix{
        PC \ar@<1.5ex>[rr]^-{\forall_C} \ar@<-1.5ex>[rr]_{\exists_C} &  & \ar[ll]|{\Delta_C} \Omega  & \exists_C\dashv \Delta_C\dashv \forall_C.
    }$$
These adjoints compose $\Box := \Delta\forall$ and $\Diamond := \Delta\exists$ yielding an ``adjoint modality" $\Diamond \dashv \Box$. Consequently, any power object in a topos is an ``internal S4-modal algebra." 
\end{example}

\begin{example} \label{exampleAlgebrasInducedbyGeometricMorphisms}
Let $F\colon \F\to \E$ denote a geometric morphism between toposes. The direct image of the subobject classifier $F_*\Omega_\F$ is again a complete Heyting algebra in $\E$. Since $\Omega_\E$ is the initial frame in $\E$, there is a unique frame morphism $i\colon \Omega_\E\to F_*\Omega_\F$. On the other hand, a morphism $\tau \colon F_*\Omega_\F\to \Omega_\E$ classifies the top element of $F_*\Omega_\F$. These morphisms are internally adjoint with $i\dashv \tau$ and by the proposition, $i\tau = \Box$ makes $f_*\Omega_\F$ into an S4-modal algebra in $\E$ (cf. Lemmas 1.2 and 1.3 of \cite{Steve}).\end{example} 

As observed in the proof of Proposition 4.2 of \cite{Steve}, the adjunction $i\dashv \tau$ from the example arises via the Yoneda Principle from an external adjunction $\Delta \dashv \Gamma$ natural in $A$ and various natural isomorphisms
    $$\xymatrix{
        \E(A,\Omega_\E) \cong \sub_\E(A) \ar@<-1ex>[rr]_-\Delta & & \ar@<-1ex>[ll]_-\Gamma \sub_\F(f^*A)\cong \F(f^*A,\Omega_\F) \cong \E(A,\Omega_*).
    }$$
See the reference for the construction of $\Gamma$. However, $\Delta$ is the restiction of $F^*$ to subobjects and that it is injective implies that the unique frame homomorphism $i$ is monic. The converse is also true.

\begin{prop}
    The following are equivalent:
        \begin{enumerate}
            \item $\Delta$ above is injective;
            \item $f^*$ is faithful;
            \item $i\colon \Omega_\E\to\Omega_*$ is monic.
        \end{enumerate}
    In the event that any of these conditions are satsified, $F$ is said to be a \textbf{surjection}.
\end{prop}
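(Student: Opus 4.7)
The plan is to handle $(1)\Leftrightarrow(3)$ via the Yoneda-style reading of $\Delta$ already foreshadowed in the text, and then couple $(1)$ and $(2)$ by going through equalizers one way and through conservativity the other.

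For $(1)\Leftrightarrow(3)$: the display preceding the proposition exhibits $\Delta_A$, up to the representable isomorphisms $\sub_\E(A)\cong\E(A,\Omega_\E)$ and $\sub_\F(F^*A)\cong\F(F^*A,\Omega_\F)\cong\E(A,F_*\Omega_\F)$, as simple post-composition with $i\colon\Omega_\E\to F_*\Omega_\F$, since $F^*$ carries the defining pullback of a subobject $S\hookrightarrow A$ to the defining pullback of $F^*S\hookrightarrow F^*A$. Hence $\Delta_A$ is injective for every $A$ iff $\E(A,i)$ is injective for every $A$, iff $i$ is monic. For $(1)\Rightarrow(2)$: given parallel arrows $f,g\colon X\to Y$ in $\E$ with $F^*f = F^*g$, the equalizer $E\hookrightarrow X$ of $f,g$ is preserved by the left exact $F^*$, so $F^*E\hookrightarrow F^*X$ equalizes $F^*f = F^*g$ and thus $F^*E = F^*X$ in $\sub_\F(F^*X)$; injectivity of $\Delta_X$ then forces $E = X$, whence $f = g$.

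For $(2)\Rightarrow(1)$, the plan is first to upgrade faithfulness to conservativity. Given $h\colon U\to V$ with $F^*h$ iso, factor $h = m\circ e$ as an image factorization in $\E$; since $F^*$ preserves epimorphisms (as a left adjoint) and monomorphisms (as a left exact functor), $F^*h = F^*m\circ F^*e$ is the image factorization of $F^*h$, so both $F^*m$ and $F^*e$ are isomorphisms. Faithfulness then forces $e$ to be monic (from $F^*e$ being monic, using that $F^*$ reflects the relevant equations) and $m$ to be epic (symmetrically); in any topos an arrow that is both monic and epic is an isomorphism, so $h$ is iso. Now if $F^*S = F^*T$ in $\sub_\F(F^*A)$, then $F^*(S\wedge T) = F^*S\wedge F^*T = F^*S$, so the inclusion $S\wedge T\hookrightarrow S$ is sent by $F^*$ to an isomorphism and hence by conservativity is itself an isomorphism, giving $S\wedge T = S$; symmetrically $S\wedge T = T$, so $S = T$. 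The main obstacle is the passage from faithfulness to conservativity; the other two steps are essentially adjunction-bookkeeping. Conservativity depends on two topos-specific facts: that a left-exact left adjoint preserves image factorizations, and that in a topos any simultaneously monic-and-epic arrow is an isomorphism.
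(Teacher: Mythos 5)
Your proposal is correct and follows essentially the same route as the paper: the paper handles $(1)\Leftrightarrow(3)$ by the same Yoneda observation (that $\Delta$ corresponds under the representable isomorphisms to post-composition with $i$), and outsources $(1)\Leftrightarrow(2)$ to Lemma VII.4.3 of Mac Lane--Moerdijk, whose proof is exactly your equalizer argument in one direction and your faithfulness-implies-conservativity argument (via preservation of image factorizations and balancedness of toposes) in the other. The only slightly loose phrase is that $F^*$ sends the classifying pullback of $S\hookrightarrow A$ to ``the defining pullback'' of $F^*S\hookrightarrow F^*A$ --- the classifier in $\F$ is $\Omega_\F$, not $F^*\Omega_\E$, so one should route through the canonical comparison map and transpose, which is precisely the naturality-in-$A$ the paper already records before the proposition; this does not affect the correctness of your conclusion.
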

\begin{proof} See Lemma VII.4.3 of \cite{MM} for the equivalence of the first two statements. That the third condition implies the first is a consequence of Yoneda.     
\end{proof}

\begin{define}
     A surjective geometric morphism $F\colon \F\to \E$ is a \textbf{geometric model}. The associated operator $\Box$ is a \textbf{geometric modality}.
\end{define}

\begin{remark} The typical situation is the following. Let $F\colon \C\to \D$ denote any functor. In the development below, this will be an estimator $E\colon \Sigma\to PC$ coming with an Abstract CBC protocol as in Definition \ref{define:HeytingValuedEstimateConsensusProtocol}. In the general case, there is an induced essential geometric morphism
    $$\xymatrix{
        [\C,\set] \ar@<1.5ex>[rr]^-{F_*} \ar@<-1.5ex>[rr]_{F_!} &  & \ar[ll]|{F^*} [\D,\set]  & F_!\dashv F^* \dashv F_*
    }$$
where the adjoints to substitution are given by left and right Kan extensions. In this case, $F_*\Omega_\C$ is an S4-modal algebra in $[\D,\set]$. This is a surjection if, and only if, every object of $\D$ is a retract of on in the image of $F$ (cf. A4.2.7(b) of \cite{ElephantI}). Thus, an estimator $E\colon \Sigma\to PC$ in an Abstract CBC protocol may give rise to a geometric model even if it is not an epimorphism by taking the bo-ff factorization of $e$ and forgetting the ff-part. So, it might as well be assumed that $e$ is surjective on objects. However, this is an unnatural assumption as discussed in the Introduction.
\end{remark}

The semantics of the operator $\Box$ originate with \cite{KripkeModal}. The basis of the present development is proposition 4.9 of \cite{Steve} that gives a forcing semantics of geometric modalities induced by surjective geometric morphisms. The following preliminary result will be needed later on.

\begin{lemma} \label{LemmaRestrictToIdentity}
    If $F\colon \F\to \E$ is a geometric model, then for any $\phi\colon A\to \Omega_*$ in $\E$, the transposes satisfy $\overline{\Box\phi} =\top_A$ if, and only if, $\overline{\phi} = \top_A$.
\end{lemma}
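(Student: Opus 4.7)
The plan is to reduce the statement, via the bijection $\E(A,\Omega_*)\cong \F(A^*,\Omega_\F)$ from the adjunction $F^*\dashv F_*$, to the equivalent claim that $\Box\phi = i\tau\phi = \top_A$ if, and only if, $\phi = \top_A$. This reduction works because $F^*$ preserves the terminal object, and a standard adjunction calculation shows that the name $\top\colon 1\to F_*\Omega_\F$ of the top element transposes to the name $\top\colon 1\to\Omega_\F$, so that $\overline{\top_A}=\top_{A^*}$ for any $A$. Since transpose is a bijection, the desired equivalence thus drops down to a statement purely in $\E$ about $\Omega_*$ and $\Omega_\E$.

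For the direction $\phi=\top_A \Rightarrow \Box\phi=\top_A$, the key small computation is that $\tau\circ\top = \top$ as arrows $1\to\Omega_\E$, because $\tau$ is by definition the classifying map of $\top\colon 1\to\Omega_*$ and the pullback of $\top\colon 1\to\Omega_\E$ along itself is the identity on $1$. From this, $\tau\phi = \tau\top_A = \top_A^\E$, writing $\top_A^\E$ for the composite $A\to 1\xrightarrow{\top}\Omega_\E$. Since $i$ is a frame morphism it preserves the top element, so $i\tau\phi = i\top_A^\E = \top_A$.

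The converse direction $\Box\phi=\top_A\Rightarrow\phi=\top_A$ is where the surjectivity hypothesis enters. By the preceding proposition, $F$ being a geometric model makes $i\colon \Omega_\E\to\Omega_*$ monic. Thus from $i\tau\phi = \top_A = i\top_A^\E$ one cancels $i$ to conclude $\tau\phi = \top_A^\E$. Finally, the universal property of $\tau$ identifies $\tau\phi$ with the classifying map in $\E$ of the pullback of $\top\colon 1\to\Omega_*$ along $\phi$; that this classifying map equals $\top_A^\E$ forces the pullback to be all of $A$, which by the universal property of $\top\colon 1\to\Omega_*$ means $\phi$ factors through $\top$, i.e., $\phi=\top_A$.

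The main obstacle is bookkeeping: there are three ambient ``tops'' landing in $\Omega_\E$, $\Omega_*$, and $\Omega_\F$, and one must keep straight how each of transpose, $i$, and $\tau$ relates them. Once this is in hand, the only nontrivial topos-theoretic ingredient beyond the formal properties of classifying maps and of adjunctions is monicity of $i$, and this is precisely what ``geometric model'' supplies.
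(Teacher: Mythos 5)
Your proof is correct and follows essentially the same route as the paper: reduce via the transpose bijection (using $\overline{\top_A}=\top_{A^*}$) to the claim that $i\tau\phi=\top_A$ iff $\phi=\top_A$ in $\E$, and then settle that claim using the classifying property of $\tau$, the fact that the frame morphism $i$ preserves the top element, and monicity of $i$ from the geometric-model hypothesis. The paper merely packages your two directions as a single pullback-pasting argument (the square for $i$ commutes and, being a commuting square of monos with $i$ monic, is a pullback), so the content is identical.
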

\begin{proof}
    Consider the diagram
        $$\xymatrix{
            & 1 \ar[d]_{\top_*} \ar@{=}[r]  & 1 \ar[d]^\top \ar@{=}[r] & 1 \ar[d]^{\top_*} \\
            A \ar[r]_\phi &  \Omega_* \ar[r]_\tau  & \Omega_\E \ar[r]_i & \Omega_*
        }$$
    Since $i$ is a frame homomorphism, the rightmost square commutes; since $i$ is monic, it is a pullback. Thus, $\phi\colon A\to \Omega_*$ factors through $\top_*$ if, and only if, $\Box\phi$ factors through $\top_*$. Since $\epsilon$ is natural, the square
        $$\xymatrix{
            1 \ar[d]_{(\top_*)^*} \ar@{=}[r] & 1 \ar[d]^\top \\
            (\Omega_*)^* \ar[r]_-\epsilon & \Omega_\F 
        }$$
    commutes. Thus, for any $\psi \colon X\to\Omega_*$, the equation $\overline \psi = \top_X$ holds if, and only if, $\psi$ factors through $\top_*$ in $\E$. Therefore, the equivalences
    \begin{align}
        \overline\phi = \top_A &\equiv \phi\text{ factors through } \top_*\notag \\
                        &\equiv \Box \phi \text{ factors through } \top_*\notag \\
                        &\equiv \overline{\Box\phi} = \top_A\notag
    \end{align}
    establish the result.
\end{proof}

\section{Presheaf Forcing Semantics of Estimate Safety}
\label{section:ForcingSemanticsofEstimateSafety}

Forcing originates with Kripke's semantics for intuitionistic logic \cite{Kripke}. For further background on Kripke-Joyal forcing as a semantics of topos logic see Chapter VI of \cite{MM}. This section illustrates the role of the total cosieve in the forcing semantics of safety. While this does require the use of geometric models, it forms the basis of the elementary axiomatization and forcing results later on.

\begin{define}
    A morphism $a\colon W\to X$ \textbf{forces} a proposition $\phi\colon X\to \Omega$ if the image of $a\colon W\to X$ factors through $S_\phi$ as in the diagram
        $$\xymatrix{
        & S_\phi \ar[d] \ar[r]  & 1 \ar[d]^\top  \\
        \mathrm{Im}(a) \ar@{-->}[ur] \ar[r] &  X \ar[r]_\phi  & \Omega
        }$$
    Equivalently, $a$ forces $\phi(x)$ if $\phi(a) = \top_W$. Denote this situation by $W\Vdash \phi(a)$.
\end{define}

\begin{example}
    In the special case where $\E$ is a presheaf topos, the forcing relation has an especially nice form. It suffices to restrict to a generating set, namely, that consisting of the canonical representable functors and to genuine elements $a\in X(C)$. One writes `$C \Vdash \phi(a)$' in the place of `$\mathbf yC\Vdash \phi(a)$.' The forcing relation takes the form
        \[
            C\Vdash \phi(a) \text{ if, and only if, } a\in S_\phi \text{ if, and only if, } \phi_C(a) = \mfrk t_C
        \]
    That is, $a$ forces $\phi$ in stage $C$ if, and only if, $a$ is in the comprhension of $\phi$ at stage $C$ if, and only if, $\phi_C$ evaluates at $a$ to the total sieve on $C$.
\end{example}

\begin{lemma}[Stability] \label{lemma:ForcingStability}
    If $W\Vdash \phi(a)$, then $V\Vdash\phi(ab)$ holds for any $b\colon V\to W$.  
\end{lemma}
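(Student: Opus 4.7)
The plan is to work with the equivalent characterization of forcing given in the definition: a morphism $a\colon W\to X$ forces $\phi\colon X\to \Omega$ precisely when $\phi\circ a = \top_W$, i.e. the composite $\phi a$ factors through $\top\colon 1\to \Omega$ via the unique arrow $!_W\colon W\to 1$. This is the computationally convenient form of the condition, and since the claim is about composing forcing morphisms with an arbitrary arrow $b\colon V\to W$, I expect the proof to amount to a one-step diagram chase.

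First I would unfold the hypothesis as the equation $\phi a = \top \circ !_W$. Then, precomposing both sides with $b\colon V\to W$ and using associativity of composition, I obtain
\[
    \phi\circ (ab) = (\phi\circ a)\circ b = \top\circ !_W \circ b.
\]
The point now is that $!_W\circ b = !_V$, because $1$ is terminal and so any morphism from $V$ to $1$ is unique. Hence $\phi\circ(ab) = \top\circ !_V = \top_V$, which is exactly the condition $V\Vdash \phi(ab)$.

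Alternatively, and perhaps more in the spirit of the pullback diagram in the definition, I could argue via the subobject $S_\phi\hookrightarrow X$: the hypothesis says that the image of $a$ factors through $S_\phi$. Since the image of $ab$ is a subobject of the image of $a$ (images are closed under precomposition), it too factors through $S_\phi$, which again yields $V\Vdash \phi(ab)$. Either route is entirely routine — there is no real obstacle here, as stability of forcing under pullback is a purely formal consequence of the fact that the defining condition is a pullback square, and pullback squares are stable under precomposition. The only subtlety worth flagging is the use of the terminal property of $1$ to identify $\top_W\circ b$ with $\top_V$, which is what makes the "truth" predicate behave uniformly across stages.
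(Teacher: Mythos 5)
Your proposal is correct and matches the paper's argument: the paper's one-line proof invokes uniqueness of image factorizations, which is exactly your second route, and your first (equational) route is just the same fact unfolded through the equivalence $\phi(a)=\top_W$ already stated in the definition of forcing. Both versions are sound, including the use of the terminal property of $1$ to identify $\top_W\circ b$ with $\top_V$.
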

\begin{proof}
    This follows by the uniqueness of image factorizations in a topos.
\end{proof}

Presheaf forcing as in the example can be relativised to an over-topos using a geometric morphism. Consider the adjoint situation
    $$\xymatrix{
        [\C,\set] \ar@<1.5ex>[rr]^-{F_*} \ar@<-1.5ex>[rr]_{F_!} &  & \ar[ll]|{F^*} [\D,\set]  & F_!\dashv F^* \dashv F_*
    }$$
Recall that the transpose of a proposition is $\overline\phi = \epsilon \phi^*$, obtained by applying $F^*$ and composing with the counit $\epsilon$. The following gives the definition of the relativised notion of forcing. It is a modification of Definition 4.8 in \cite{Steve} by allowing $F$ to be an arbitrary functor $\C\to\D$.

\begin{define}[Relativised Forcing] \label{define:RelForcing}
    An element $a\in X(FC)$ \textbf{forces} $\phi(x)\colon X\to \Omega_*$ in state $C$ if $C \Vdash \overline\phi(a)$ holds in $[\C,\set]$. Denote this situation by $C\Vdash_* \phi(a)$.
\end{define}

The required semamtics for this relativisted notion of forcing is the following. Notice that the proof does not require that $F$ induces a geometric model.

 \begin{lemma}[Presheaf Forcing Semantics] \label{lemma:SemanticsofTotalCosieve}
    $C\Vdash_* \phi(a)$ holds if, and only if, $D\Vdash_* \phi(f_!a)$ for all $f\colon C\to D$.
\end{lemma}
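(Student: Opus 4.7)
The plan is to unwind Definition \ref{define:RelForcing} and reduce to the standard copresheaf forcing characterization via the total cosieve. First, by that definition, $C \Vdash_* \phi(a)$ means exactly $C \Vdash \overline\phi(a)$ in the topos $[\C,\set]$, where $\overline\phi\colon F^*X \to \Omega_\C$ is the transpose of $\phi$ and $a$ is read as an element of $F^*X(C) = X(FC)$. So the lemma reduces to an internal statement about the proposition $\overline\phi$ in the single copresheaf topos $[\C,\set]$.

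Next I would apply the classifying-map description from the earlier example (with cosieves in place of sieves, since we are now in the copresheaf setting). For any $\psi\colon Y\to\Omega_\C$ and $y\in Y(C)$, the component $\psi_C(y)$ is the cosieve
    \[
        \lbrace h\colon C\to E \mid Y(h)(y)\in S_\psi(E)\rbrace,
    \]
and $C\Vdash\psi(y)$ holds iff this cosieve is the total cosieve $\mfrk t_C$. From this it is immediate that the following are equivalent:
\begin{enumerate}
    \item $C \Vdash \psi(y)$;
    \item for every $f\colon C\to D$ in $\C$, $D \Vdash \psi(Y(f)(y))$.
\end{enumerate}
The forward direction is a direct invocation of stability (Lemma \ref{lemma:ForcingStability}) along $\mathbf y f\colon \mathbf y D\to \mathbf y C$, and the converse is trivial by taking $f=1_C$, which recovers exactly the hypothesis.

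Finally I would specialize $\psi = \overline\phi$ and $Y = F^*X$. Because $F^*$ is precomposition with $F$, the transition map of $F^*X$ along an arrow $f\colon C\to D$ in $\C$ is $X(Ff)\colon X(FC)\to X(FD)$; writing $f_! a := X(Ff)(a) \in X(FD)$, the condition $D\Vdash\overline\phi(f_! a)$ is by definition $D \Vdash_* \phi(f_! a)$. Chaining the equivalences yields the stated biconditional.

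The main obstacle is almost purely notational: tracking that $f$ lives in $\C$ so that $Ff$ lands in $\D$ where it can act on $a\in X(FC)$, and confirming that the covariant action $f_!$ on elements of $F^*X$ coincides with the functorial action of $Ff$ on elements of $X$. Notably, no hypothesis that $F$ induces a geometric model is used, since only the inverse-image functor $F^*$ appears in the argument.
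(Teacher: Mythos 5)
Your proof is correct and follows essentially the same route as the paper's: unwind $\Vdash_*$ to $\Vdash$ for the transpose $\overline\phi$, then use the cosieve-valued classifying map to see that forcing at $C$ amounts to the value being the total cosieve, which holds exactly when every $f\colon C\to D$ sends $a$ into the comprehension of $\overline\phi$. Your additional observation that the equivalence can be packaged as ``stability plus specialization at $f=1_C$,'' and your remark that no geometric-model hypothesis is needed, both match the paper.
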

\begin{proof}
    The equivalences 
        \begin{align}
            C\Vdash_* \phi(a) & \equiv C\Vdash \epsilon \phi^*(a) \qquad &\text{(def. `$\Vdash_*$')}\notag \\
                             & \equiv \epsilon \phi^*(a) = \top_C \qquad &\text{(def. `$\Vdash$')} \notag \\
                             & \equiv \lbrace f\colon C\to D\mid f_!(a) \in S_{\epsilon\phi^*}\rbrace = \mfrk t_C \qquad &\text{(constr. `$\chi$')} \notag \\
                             & \equiv f_!(a) \in S_{\epsilon\phi^*} \text{ for all } f\colon C\to D \qquad & \text{(def. $\mfrk t_C$)} \notag\\
                             & \equiv \epsilon\phi^*(f_!(a)) = \top_D \text{ for all } f\colon C\to D \qquad &\text{(def. $S_{\epsilon\phi^*}$)}\notag\\
                             & \equiv D\Vdash \epsilon\phi^*(f_!(a)) \text{ for all } f\colon C\to D \qquad &\text{(def. `$\Vdash$')}\notag\\
                             & \equiv D\Vdash_*\phi(f_!(a)) \text{ for all } f\colon C\to D \qquad &\text{(def. `$\Vdash_*$')}\notag
        \end{align}
    establish the result.
\end{proof}

Now, give a presheaf forcing semantics of estimate safety. Fix the estimator $E\colon \Sigma_0\to PC$ in an estimate consensus protocol as in Definition \ref{define:HeytingValuedEstimateConsensusProtocol} and consider the associated geomtric morphism $e\colon \F\to\E$ with $\F= [\Sigma,\set]$ and $\E= [PC,\set]$. Interpret a proposition $p\in PC$ as a subobject of $1$ in $[PC,\set]$ by 
\begin{equation} \label{inducedpoint0}
    p(S) = \begin{cases} 1 \qquad &\text{if $p\subset S$} \\ 0 \qquad &\text{otherwise} \end{cases} 
\end{equation}
This is the same as noting that $p$ induces a canonical representable functor $yp\colon PC\to \set$ and then by taking support $yp \to U \to 1$ via image factorization there is a corresponding subobject of $1$. Likewise, each $w\in \Sigma$ determines two such subojects -- one in $[\Sigma,\set]$ in the same manner, and another in $[PC,\set]$ since $ew$ is an element of $PC$. In the theorem below, interpret such states and propositions used in formulas as subobjects of $1$. For a proposition $p\colon U\to 1$, let $\chi_p\colon 1\to \Omega$ denote the classifying arrow and let $i\colon \Omega\to \Omega_*$ denote the unique frame homomorphism. This is monic if, and only if, $E$ induces a geometric model. In this case $p$ is the subobject classified by $i\chi_p$ since the square involving $i$ is also a pullback.

\begin{theo} \label{theo:PresheafForcingSemanticsofSafety}
    Suppose that $E\colon \Sigma_0\to PC$ induces a geometric model. A proposition $p\colon U\to 1$ is safe in state $w$ if, and only if, $w\Vdash_* i\chi_p$.
\end{theo}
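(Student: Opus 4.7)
The plan is to prove the biconditional by a chain of equivalences, using Lemma \ref{lemma:SemanticsofTotalCosieve} to bridge the forcing statement and the universally quantified safety condition. First I would observe that $i\chi_p$ has domain $1$, so the element in Definition \ref{define:RelForcing} is the unique global section, and forcing depends only on the base stage. Applying Lemma \ref{lemma:SemanticsofTotalCosieve} to $w\Vdash_* i\chi_p$, this becomes: for every execution $f\colon w\to w'$ in $\Sigma$, $w'\Vdash_* i\chi_p$.

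Next I would unpack $w'\Vdash_* i\chi_p$ via Definition \ref{define:RelForcing} to $w'\Vdash \overline{i\chi_p}$ in $[\Sigma,\set]$. The core technical step is to identify the transpose $\overline{i\chi_p}\colon 1\to \Omega_\F$ as the classifier of $E^*U\rightarrowtail 1$ in $[\Sigma,\set]$. The geometric-model hypothesis enters here: because $i\colon \Omega\to\Omega_*$ is monic, the right-hand pullback square in Lemma \ref{LemmaRestrictToIdentity} shows that $i\chi_p$ classifies $p\colon U\to 1$ in $[PC,\set]$. Applying $e^*=E^*$, which preserves finite limits, and pasting with the counit pullback square at $\Omega_*$ from the same lemma presents $\overline{i\chi_p}=\epsilon\cdot (i\chi_p)^*$ as the classifier of $E^*U\to 1$.

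With the classifier identified, $w'\Vdash \overline{i\chi_p}$ reduces to $(E^*U)(w')=1$, which since $E^*$ is precomposition with $E$ equals $U(Ew')=1$. Equation \ref{inducedpoint0} converts this into the Heyting inequality in $PC$ that, by Lemma \ref{HeytingLemma1}(3), is equivalent to $Ew'\Rightarrow p=\top$, the condition defining estimate safety of $p$ at $w'$. Assembling the equivalences, $w\Vdash_* i\chi_p$ iff $Ew'\Rightarrow p=\top$ for every $w\to w'$, which is the definition of $S(p,w)$.

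The main obstacle is the transpose identification, where careful pullback pasting is required and the geometric-model hypothesis is essential: without $i$ monic, $i\chi_p$ would classify a different subobject in $[PC,\set]$ and the link to safety of $p$ itself would be severed. The remaining manipulations are direct applications of the semantics lemma, the definition of relative forcing, and equation \ref{inducedpoint0}.
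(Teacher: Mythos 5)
Your proposal is correct and follows essentially the same route as the paper's proof: apply Lemma \ref{lemma:SemanticsofTotalCosieve} to reduce $w\Vdash_* i\chi_p$ to forcing at all accessible states, unpack Definition \ref{define:RelForcing}, use monicity of $i$ (the geometric-model hypothesis) to identify the subobject classified by the transpose $\overline{i\chi_p}$ with the one coming from $p$, and finish with equation \ref{inducedpoint0} and Lemma \ref{HeytingLemma1}. The only difference is presentational --- you compute $S_{\overline{i\chi_p}}$ explicitly as $E^*U$ by pasting pullbacks and evaluating pointwise, whereas the paper phrases the same step as an isomorphism of transposed subobjects of $1$ --- so no further comparison is needed.
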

\begin{proof}
    Consider the following equivalences:
        \begin{align}
            w\Vdash_* i\chi_p & \equiv v\Vdash_* i\chi_p \text{ for all } w\to v \qquad &\text{(Lemma \ref{lemma:SemanticsofTotalCosieve})} \notag \\
                              & \equiv v \Vdash \overline{i\chi_p} \text{ for all } w\to v \qquad &\text{(def `$\Vdash_*$')} \notag \\
                              & \equiv v\leq S_{\,\overline{i\chi_p}} \text{ in } \sub_\F(1) \text{ for all } w\to v \qquad &\text{(def. `$\Vdash$')} \notag \\
                              & \equiv ev\leq p \text{ in } \sub(1)_\E \text{ for all } w\to v \qquad &\text{(transpose \& $i$ monic)} \notag \\                     
                              & \equiv ev \Rightarrow p=\top  \text{ in } \sub(1)_\E \text{ for all } w\to v.\qquad &\text{(Lemma \ref{HeytingLemma1})} \notag
        \end{align}
    Note that the second to last step uses the fact that the transpose of $S_{\,\overline {i\chi_p}}$ is isomorphic to $p$ as subobjects of $1$ in $\E$ because $p$ is the subobject classified by $i\chi_p$ since $i$ is monic.
\end{proof}

\begin{cor}[Persistence]
    If $E$ induces a geometric model, then if $p$ is safe in $w$ and $w\to v$ is an execution, then $p$ is safe in $v$ too.   
\end{cor}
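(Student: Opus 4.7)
The strategy is to transport both the hypothesis and the conclusion through Theorem \ref{theo:PresheafForcingSemanticsofSafety} into relativised forcing statements, and then read off the result from Lemma \ref{lemma:SemanticsofTotalCosieve}. Since $E$ is assumed to induce a geometric model, the theorem is available: safety of $p$ at a state is equivalent to $(-)\Vdash_* i\chi_p$ evaluated at that state.

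The plan is as follows. First, assume $S(p,w)$, so that by Theorem \ref{theo:PresheafForcingSemanticsofSafety} we have $w \Vdash_* i\chi_p$. Next, apply the forward direction of Lemma \ref{lemma:SemanticsofTotalCosieve} to the unique element of $1$ at stage $w$: this says exactly that $v \Vdash_* i\chi_p$ holds for every arrow $w \to v$ in $\Sigma$. In particular, for the given execution $w\to v$, we obtain $v \Vdash_* i\chi_p$. Finally, invoke Theorem \ref{theo:PresheafForcingSemanticsofSafety} in the reverse direction at the state $v$ to conclude $S(p,v)$.

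There is essentially no obstacle here because Lemma \ref{lemma:SemanticsofTotalCosieve} already bakes persistence into the very definition of the relativised forcing relation via the total cosieve: forcing at $w$ literally quantifies over all $f\colon w\to v$. The only conceptual point worth flagging in the write-up is that one must be careful about which direction of the biconditional in Lemma \ref{lemma:SemanticsofTotalCosieve} is used, and to note that $E$ inducing a geometric model is needed only to apply the theorem (the lemma itself does not require this). Alternatively, one could bypass the lemma and argue directly using the stability property in Lemma \ref{lemma:ForcingStability} applied to the transposed proposition $\overline{i\chi_p}$ along the image of the execution $w\to v$ under $E_!$, but the route via Lemma \ref{lemma:SemanticsofTotalCosieve} is cleaner and more in keeping with the modal viewpoint that safety is a universal quantification over accessible future states.
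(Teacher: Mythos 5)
Your proof is correct and follows essentially the same route as the paper's: both convert safety into the relativised forcing statement via Theorem \ref{theo:PresheafForcingSemanticsofSafety} and then appeal to Lemma \ref{lemma:SemanticsofTotalCosieve}. The only cosmetic difference is that you chain $w\Vdash_* i\chi_p \Rightarrow v\Vdash_* i\chi_p \Rightarrow S(p,v)$ directly, whereas the paper compares the two quantified families of accessible states (every state accessible from $v$ is accessible from $w$); these amount to the same argument.
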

\begin{proof}
    The theorem shows that safety is a forcing relation. Lemma \ref{lemma:SemanticsofTotalCosieve} then establishes the statement since any state accessible from $v$ is one accessible from $w$.
\end{proof}

Now, give a modal interpretation of estimate safety. Adopt the same set-up as for Theorem \ref{theo:PresheafForcingSemanticsofSafety} above. The required semantics for $\Box$ extends Proposition 4.9 in \cite{Steve} from inclusions $|\C|\to\C$ to functors $\C\to\D$ inducing a geometric model. 

\begin{theo}[$\Box$-Semantics] \label{theorem:BoxSemanticsElementaryVersion} Suppose that $F\colon \F\to \E$ is a geometric model. Then $C\Vdash_*\Box \phi(a)$ holds if, and only if, $D\Vdash_* \phi(f_!a)$ holds for all $f\colon C\to D$.
\end{theo}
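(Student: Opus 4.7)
My plan is to combine Lemma \ref{lemma:SemanticsofTotalCosieve}, applied to the proposition $\Box\phi$, with a pointwise equivalence $D \Vdash_* \phi(b) \iff D \Vdash_* \Box\phi(b)$ holding at every stage $D$ and every $b \in X(FD)$. The semantics lemma immediately yields $C \Vdash_* \Box\phi(a)$ iff $D \Vdash_* \Box\phi(f_!a)$ for all $f \colon C \to D$, so only this pointwise substitution of $\phi$ for $\Box\phi$ needs justification.

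I would unpack the pointwise step at the level of classified subobjects. For any $\psi \colon X \to \Omega_*$ in $\E$, the forcing relation $D \Vdash_* \psi(b)$ amounts to $b \in S_{\overline\psi}(D)$, since forcing at the representable stage $yD$ in the covariant presheaf topos $\F$ asserts exactly that $\overline\psi$ evaluates on $b$ to the total cosieve $\mfrk t_D$. Because $F^*$ preserves finite limits and $\overline\psi = \epsilon \circ \psi^*$, the pullback of $\top_\F$ along $\overline\psi$ is $F^* S_\psi$, where $S_\psi$ is the pullback of $\top_* \colon 1 \to \Omega_*$ along $\psi$ in $\E$; this step relies on the identity $\epsilon \circ (\top_*)^* = \top_\F$ recorded in the proof of Lemma \ref{LemmaRestrictToIdentity}. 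Hence $D \Vdash_* \psi(b)$ iff $b \in S_\psi(FD)$, and it suffices to prove $S_\phi = S_{\Box\phi}$ as subobjects of $X$ in $\E$.

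This last identification is the main technical obstacle and the place where the surjection hypothesis becomes essential. The rightmost square in the diagram of Lemma \ref{LemmaRestrictToIdentity}, namely $i \circ \top_\E = \top_*$, commutes because $i$ is a frame homomorphism and is a pullback because $i$ is monic, which is exactly the surjection assumption on $F$. Pullback pasting along $X \xrightarrow{\tau\phi} \Omega_\E \xrightarrow{i} \Omega_*$ shows that $S_{\Box\phi}$, the pullback of $\top_*$ along $\Box\phi = i\tau\phi$, equals the pullback of $\top_\E$ along $\tau\phi$, i.e.\ the subobject of $X$ classified by $\tau\phi$; a second pasting using that $\tau$ classifies $\top_*$ identifies this with $S_\phi$. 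Chaining these equalities with the unpacking from the previous paragraph delivers the required pointwise equivalence, and the theorem then follows from the first paragraph. Without monicity of $i$ the first of these two pasting identifications collapses, so the dependence on $F$ being a surjection cannot be dispensed with by this argument.
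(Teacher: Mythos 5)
Your overall strategy mirrors the paper's (reduce to a pointwise equivalence between forcing $\phi$ and forcing $\Box\phi$, then invoke Lemma \ref{lemma:SemanticsofTotalCosieve}), but the key step in your second paragraph fails. You claim that the pullback of $\top_\F$ along $\overline\psi=\epsilon\circ\psi^*$ is $F^*S_\psi$; this requires the naturality square $\epsilon\circ(\top_*)^*=\top_\F$ to be a \emph{pullback}, whereas the proof of Lemma \ref{LemmaRestrictToIdentity} only asserts (and only needs) that it \emph{commutes}, and in general it is not a pullback. Concretely, let $\C$ be the discrete category on two objects $C_1,C_2$, let $\D=1$, and let $F$ be the unique functor, which induces a surjection $\set\times\set\to\set$. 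Then $\Omega_*=\mathbf 2\times\mathbf 2$, the subobject $(\top_*)^*$ picks out $(\top,\top)$ at each stage, but $\epsilon_{C_1}\colon \mathbf 2\times\mathbf 2\to\mathbf 2$ is the first projection, so $\epsilon_{C_1}^{-1}(\top)=\lbrace(\top,\top),(\top,\bot)\rbrace$ is strictly larger. Taking $X=1$ and $\psi\colon 1\to\Omega_*$ the point $(\top,\bot)$ gives $S_\psi=\emptyset$ yet $C_1\Vdash_*\psi(*)$, so $S_{\overline\psi}\neq F^*S_\psi$. Your pasting argument that $S_{\Box\phi}=S_\phi$ in $\E$ is correct; what is true on the other side is that the subobject classified by $\overline{\Box\phi}$ is $F^*S_\phi$ (since $\overline{\Box\phi}=\overline i\circ(\tau\phi)^*$ and $\overline i$ classifies $(\top_\E)^*$), while the subobject classified by $\overline\phi$ is in general strictly larger. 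Only the inclusion $F^*S_\phi\leq S_{\overline\phi}$ holds, coming from $\Box\leq\mathrm{id}$.

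Be aware that this is not a reparable slip in your write-up alone: in the example above $C_1\Vdash_*\psi(*)$ holds while $C_1\Vdash_*\Box\psi(*)$ fails (since $\Box\psi=(\bot,\bot)$), and the only arrow of $\C$ out of $C_1$ is the identity, so the pointwise equivalence you aim for, and indeed the theorem as stated, are both refuted. The paper's own proof commits essentially the same error at the same spot: it invokes Lemma \ref{LemmaRestrictToIdentity}, a statement about the \emph{entire} transpose ($\overline{\Box\phi}=\top_A$ iff $\overline\phi=\top_A$), to justify the elementwise step $\overline{\Box\phi}(a)=\mfrk t_C\equiv\overline\phi(a)=\mfrk t_C$, which does not follow. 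In Proposition 4.9 of the cited reference the right-hand quantifier runs over the arrows of the \emph{target} category out of $FC$ (there $|\C|\to\C$, so over all arrows of $\C$), not over arrows of the source; that is the statement your computation of the subobject classified by $\overline{\Box\phi}$ actually supports, and any repair should quantify over $g\colon FC\to D'$ in $\D$.
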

\begin{proof} Consider the following equivalences:
    \begin{align}
        C\Vdash_*\Box\phi(a) & \equiv C\Vdash \overline{\Box\phi}(a) \notag \qquad &\text{(def. `$\Vdash_*$')} \\
                             & \equiv \overline{\Box\phi}(a) = \mfrk t_C \notag \qquad &\text{(def. `$\Vdash$')} \\
                             & \equiv \overline{\phi}(a) = \mfrk t_C \notag \qquad &\text{(Lemma \ref{LemmaRestrictToIdentity})} \\
                             & \equiv C\Vdash \overline{\phi}(a) \notag \qquad &\text{(def. `$\Vdash$')} \\
                             & \equiv C\Vdash_*\phi(a) \notag \qquad &\text{(def. `$\Vdash_*$')} \\
                             & \equiv D\Vdash_* \phi(f_!a) \text{ for all } f\colon C\to D \qquad &\text{(Lemma \ref{lemma:SemanticsofTotalCosieve})} \notag
    \end{align}
    These establish the result.
\end{proof}

\begin{cor}[Modal Interpretation of Safety] \label{cor:PresheafSafetyisModal}  Assume that $E$ results in a geometric model. A proposition $p$ is safe in $w$ if, and only if, $w\Vdash_* \Box i\chi_p$.
\end{cor}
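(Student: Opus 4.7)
The plan is to chain together the two preceding results, Theorem \ref{theo:PresheafForcingSemanticsofSafety} and Theorem \ref{theorem:BoxSemanticsElementaryVersion}, since both are available under the assumption that $E$ induces a geometric model. The corollary is essentially a repackaging: the forcing characterisation of safety was given in terms of $i\chi_p$, and the $\Box$-semantics just established tells us that prefixing a proposition by $\Box$ does not alter what it forces at any stage.

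Concretely, I would first invoke Theorem \ref{theo:PresheafForcingSemanticsofSafety} to rewrite ``$p$ is safe in $w$'' as the forcing statement $w \Vdash_* i\chi_p$. Then I would apply Theorem \ref{theorem:BoxSemanticsElementaryVersion} to the proposition $\phi = i\chi_p\colon 1 \to \Omega_*$ with $a = \mathrm{id}$ (or equivalently to the state $w$ interpreted as a subobject of $1$, matching the setup in Theorem \ref{theo:PresheafForcingSemanticsofSafety}). The content of Theorem \ref{theorem:BoxSemanticsElementaryVersion} is that $w \Vdash_* \Box \phi$ iff $v \Vdash_* \phi(f_!-)$ for all executions $w \to v$, and in turn, by Lemma \ref{lemma:SemanticsofTotalCosieve}, this is equivalent to $w \Vdash_* \phi$ itself. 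Hence $w \Vdash_* i\chi_p \equiv w \Vdash_* \Box i\chi_p$, which combined with the first step yields the claim.

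I would present the proof as a short chain of equivalences, of the form
\[
  p \text{ is safe in } w \;\equiv\; w \Vdash_* i\chi_p \;\equiv\; w \Vdash_* \Box i\chi_p,
\]
annotating the first step by Theorem \ref{theo:PresheafForcingSemanticsofSafety} and the second by Theorem \ref{theorem:BoxSemanticsElementaryVersion} (together with Lemma \ref{lemma:SemanticsofTotalCosieve} if one wants to be fully explicit about the middle equivalence in the $\Box$-semantics).

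I do not anticipate a genuine obstacle here; the corollary is a direct consequence of two results proved just above it. The only care required is bookkeeping: making sure that the interpretation of $w$ and $p$ as subobjects of $1$ used in Theorem \ref{theo:PresheafForcingSemanticsofSafety} matches the data $(A,\phi,a)$ used in Theorem \ref{theorem:BoxSemanticsElementaryVersion}, and that the hypothesis ``$E$ induces a geometric model'' is genuinely needed, since both ingredients depend on it (Theorem \ref{theo:PresheafForcingSemanticsofSafety} needs $i$ monic in order to recover $p$ as the subobject classified by $i\chi_p$, and Theorem \ref{theorem:BoxSemanticsElementaryVersion} needs it to apply Lemma \ref{LemmaRestrictToIdentity}).
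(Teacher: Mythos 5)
Your argument is correct and is essentially identical to the paper's: the corollary is obtained by combining Theorem \ref{theo:PresheafForcingSemanticsofSafety}, Lemma \ref{lemma:SemanticsofTotalCosieve}, and Theorem \ref{theorem:BoxSemanticsElementaryVersion}, exactly as you propose. Your added bookkeeping about matching the data and the role of the geometric-model hypothesis is accurate but not needed beyond what the paper already records.
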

\begin{proof}
    This follows from Theorem \ref{theo:PresheafForcingSemanticsofSafety} above, Lemma \ref{lemma:SemanticsofTotalCosieve} and finally Theorem \ref{theorem:BoxSemanticsElementaryVersion}.
\end{proof}

\begin{remark} \label{remark:SheafStuff}
    There are at least two reasons that it might be expected the safety results above would involve passing to sheaf toposes. First any $PC$, even in a elementary topos, is the direct image of the subobject classifier of a sheaf topos, namely, sheaves on $PC$ viewed as an internal frame (cf. \S 5 of \cite{Steve}). Additionally the space of consensus values should be nonempty, that is, should admit an epimorphism $C\to 1$, meaning that the induced geometric morphism from sheaves on $PC$ to the base topos should be a surjection. Clearly, $\Omega$ in the base topos then embeds faithfully into $\Omega_*$ and the conditions of the previous results are satisfied. Additionally, subobjects of $1$ in sheaves on $PC$ are precisely the global elements of $PC$, so there is a 1-1 correspondence between propositions about consensus values and subobjects of $1$. However, this does not work for at least a couple of reasons. First is that forcing should happen relative to protocol states, not generalized elements of $PC$. Secondly, the semantics of the total coseive in Lemma \ref{lemma:SemanticsofTotalCosieve} were crucial in the sense that it ensures that the directed upperset on a state $w$ is contained below the safe proposition $p$. This be a cosieve, as otherwise, for ordinary sheaves (contravariant functors!), the subobject classifier is sets of sieves, namely, \emph{downward closed} sets.
\end{remark}

\section{Consensus Protocols in a Topos}
\label{section:ConsensusProtocolsinaTopos}

Abstract CBC can be formulated in an arbitrary topos. An estimator will pick out a proposition for every protocol state in a functorial way. Therefore, an estimator is an internal functor $e\colon \Sigma\to PC$ satisfying the internalized version of the compatibility condition. Here is the formal definition.

\begin{define} \label{define:abstractCBC}
An \textbf{estimate consensus protocol} in a topos $\E$ consists of
\begin{enumerate}
    \item an object $C$ of consensus values;
    \item an internal category $\Sigma$ of protocol states $\Sigma_0$ and executions $\Sigma_1$;
    \item an internal functor $e\colon \Sigma\to PC$ called the \textbf{estimator} satisfying the condition that  
        \begin{equation} \label{equation:EstimatorCondition}
            \text{ if } e(w) \Rightarrow p = \top \text{ for some state } w\colon X\to \Sigma_0, \text{ then } \neg (e(w)\Rightarrow \neg p) = \top
        \end{equation}
        for any proposition $p\colon 1\to PC$.
\end{enumerate}
\end{define}

This phrasing has the advantage of incorporating all the data in a single morphism $e\colon \Sigma \to PC$ of $\cat(\E)$. It therefore is not too much to identify an estimate safety consensus protocol with the internal functor $e\colon \Sigma \to PC$. Now, the main definition of the paper:

\begin{define} \label{definitionToposSafety}
A proposition $p\colon 1\to PC$ is \textbf{safe in the protocol state} $w\colon W\to \Sigma_0$ if for any execution $f\colon w \to v$, it follows that $ew\Rightarrow p = \top$ holds. 
\end{define}

\begin{prop}[Persistence] \label{PersistenceLemma2}
    If $p\Rightarrow q = \top$, then that $p$ is safe in $w$, implies that $q$ is safe in $w$ too.
\end{prop}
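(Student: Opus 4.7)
The plan is to mirror the argument for the Heyting-valued Persistence Lemma \ref{PersistenceLemma}, but now interpreted in the internal Heyting algebra $PC$ of the topos $\E$. The three ingredients we need are: (i) the translation between the equations $p \Rightarrow q = \top$ and the internal order $p \leq q$, given by Lemma \ref{HeytingLemma1}(3) applied internally; (ii) transitivity of the internal order $\leq$ on $PC$; and (iii) the fact that a generalized element $w\colon W\to \Sigma_0$ gives rise to a morphism $ew\colon W\to PC$ to which the internal Heyting structure on $PC$ may be applied.

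Concretely, I would begin by fixing an arbitrary execution $f\colon w\to v$ of $\Sigma$ in $\E$, so that $v\colon W\to \Sigma_0$ is a state at the same stage $W$ as $w$. By the hypothesis that $p$ is safe in $w$, one has $ev\Rightarrow p = \top_W$, where $ev\colon W\to PC$ is the composite arising from the internal functor $e$. Since $PC$ is an internal Heyting algebra (by the discussion preceding this section, its operations are induced pointwise from the subobject lattices $\sub(C\times W)$ via Yoneda), the equivalences recorded in Lemma \ref{HeytingLemma1}(3) hold at stage $W$; hence $ev\Rightarrow p = \top_W$ is equivalent to $ev \leq p$ in the internal order, where the global element $p\colon 1\to PC$ is pulled back along $W\to 1$.

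Next, I would apply the same equivalence to the assumption $p\Rightarrow q = \top$: pulled back to stage $W$ this yields $p\leq q$ internally. Transitivity of $\leq$ in the internal Heyting algebra $PC$ then gives $ev\leq q$ at stage $W$, and invoking Lemma \ref{HeytingLemma1}(3) once more converts this back to $ev\Rightarrow q = \top_W$. Because $f\colon w\to v$ was arbitrary, this says precisely that $q$ is safe in $w$ in the sense of Definition \ref{definitionToposSafety}.

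The only genuine subtlety, and therefore the place I would check most carefully, is the appeal to Lemma \ref{HeytingLemma1} at a generalized stage $W$: one needs that the external Heyting laws used to prove that lemma transfer to the internal Heyting structure on $PC$. This is routine by Yoneda, since the structure on $PC$ is defined so that $\E(W,PC)\cong \sub_\E(C\times W)$ is a Heyting algebra homomorphism, and the identities of Lemma \ref{HeytingLemma1} hold in every Heyting algebra. No additional hypotheses on $e$ (such as its giving rise to a geometric model) are required, so this persistence result is the internal counterpart of Lemma \ref{PersistenceLemma} in full generality.
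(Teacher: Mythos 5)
Your proposal is correct and follows essentially the same route as the paper's proof: fix an execution $w\to v$, use Lemma \ref{HeytingLemma1}(3) to convert $ev\Rightarrow p=\top$ and $p\Rightarrow q=\top$ into the order statements $ev\leq p\leq q$, apply transitivity, and convert back to $ev\Rightarrow q=\top$. The extra care you take in justifying the internal interpretation at stage $W$ via Yoneda is a sound elaboration of a step the paper leaves implicit.
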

\begin{proof} Start with any execution $w\to v$ and assume that $ev \Rightarrow p = \top$. Then by Lemma \ref{HeytingLemma1}, equivalently $ev \leq p\leq q$ holds and therefore by transitivity of `$\leq$' and the same lemma $ev \Rightarrow q = \top$, as required. \end{proof}

\begin{define}
States $w_1, w_2:\Sigma_0$ are \textbf{compatible} if there is a state $w_3:\Sigma_0$ and executions $w_1\to w_3 \leftarrow w_2$. Denote compatibility by `$w_1\simeq w_2$'. Compatible states are said to have a \textbf{common future}.
\end{define}

In this set-up it is possible to give a completely elementary account of the presheaf forcing semantics of the previous sections. This is a prelude to the account given in \S \ref{section:ElementarySafetyInaTopos} that uses internal category theory without direct reference to the internal diagram categories used here. This is a somewhat tangential discussion that can be skipped without losing the main thread. 

First recall some standard definitions (cf. \S V.7 of \cite{MM} or \S B2.3 of \cite{ElephantI}). Throughout let $\basetopos$ denote a topos thought of as playing the role of $\set$. An \textbf{internal category} $\bb C$ in $\basetopos$ consists of the data of objects and arrows
    $$\xymatrix{
        C_1\times_{C_0}C_1 \ar[r]^-\otimes & C_1 \ar@<1.1ex>[r]^{d_0} \ar@<-1.1ex>[r]_{d_1} & \ar[l]|{y} C_0
    }$$
satisfying the usual axioms for a category in diagrammatic form. An \textbf{internal functor} $f\colon \bb C \to \bb D$ consists of arrows $f_0\colon C_0\to D_0$ and $f_1\colon C_1\to D_1$ commuting with the identity, domain, codomain and composition morphisms coming with $\bb C$ and $\bb D$.

\begin{define}
    Let $\bb C$ denote a category in $\basetopos$. An \textbf{internal diagram} on $\bb C$ consists of an arrow $\gamma\colon X\to C_0$ and an action morphism $m\colon X\times_{C_0}C_1\to C_1$ such that 
        $$\xymatrix{
            X\times_{C_0}C_1\times_{C_0}C_1 \ar[d]_{m\times 1} \ar[r]^-{1\times m} & X\times_{C_0}C_1 \ar[d]^m & & X \ar@{=}[dr] \ar[r]^-{\langle \gamma,y\rangle} & X\times_{C_0}C_1 \ar[d]^m \\ 
            X\times_{C_0}C_1 \ar[r]_-m & X & & & X
        }$$
    both commute. A morphism of internal diagrams is an arrow $f\colon X\to Y$ that is equivariant with respect to the actions; that is, the diagram
        $$\xymatrix{
            X\times_{C_0}C_1 \ar[d]_m \ar[r]^-{f\times 1} & Y\times_{C_0}C_1 \ar[d]^n \\
            X \ar[r]_f & Y
        }$$
    commutes. Internal diagrams and their morphisms form a category $[\bb C,\basetopos]$.
\end{define}

Categories of internal diagrams are an elementary version of base-valued functors, that is, ordinary presheaves on the given category. Needed results on such categories are developed in \S B2.3 of \cite{ElephantI}. There it is shown that the underlying functor $[\bb C,\basetopos] \to\basetopos/C_0$ is comonadic, making $[\bb C,\basetopos]$ a topos. Additionally, any internal functor $F\colon \bb C\to \bb D$ induces a geometric morphism 
    \[
        F\colon [\bb C,\basetopos] \to [\bb D,\basetopos].    
    \]
This is an internal analogue of the geometric morphism induced by a functor between ordinary presheaf toposes. Forcing has the following form. A proposition and generalized element will take the form of commutative triangles such as
    $$\xymatrix{
        Z \ar[d]_c \ar[r]^a & X \ar[d]_\gamma \ar[r]^\phi & \Omega \ar[d]^\omega \\ 
        C_0 \ar@{=}[r] & C_0 \ar@{=}[r] & C_0
    }$$
Mimicing the presheaf phrasing of forcing, $c\Vdash \phi(a)$ holds if $a$ factors through $S_\phi$ in $[\bb C,\basetopos]$. Denote the map accomplishing this by $\hat a \colon W \to S_\phi$ where $u\colon S_\phi\to X$ computes the required pullback in $[\bb C,\basetopos]$. The first result concerning forcing follows essentially by equivariance. It is an internal version of Lemma \ref{lemma:SemanticsofTotalCosieve}.

\begin{lemma} \label{lemma:InternalTechnicalResult}
    For any $\phi\colon X\to \Omega$ in $[\bb C,\basetopos]$, if $c\Vdash \phi(a)$ holds, then $d\Vdash\phi(f_!a)$ for all $f\colon c\to d$ with $f\colon C_1$.
\end{lemma}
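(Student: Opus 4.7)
The plan is to unpack the forcing hypothesis $c \Vdash \phi(a)$ as a lift of $a$ through the subobject inclusion $u\colon S_\phi \hookrightarrow X$ in $[\bb C,\basetopos]$, and then transport that lift along the action morphism using the fact that $u$ is a morphism of internal diagrams. Since $S_\phi$ is obtained as the pullback of $\top\colon 1 \to \Omega$ along $\phi$ taken in $[\bb C,\basetopos]$, it is itself an internal diagram on $\bb C$, and $u$ is equivariant with respect to the actions on $S_\phi$ and on $X$. This single fact is the only ingredient of substance; the rest of the argument mirrors the presheaf case of Lemma \ref{lemma:SemanticsofTotalCosieve}.

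In detail, the hypothesis provides $\hat a \colon Z \to S_\phi$ with $u \hat a = a$. Writing $\gamma_X$ and $\gamma_{S_\phi}$ for the respective label maps, one has $\gamma_X u = \gamma_{S_\phi}$ and $\gamma_X a = c = d_0 f$, so $\gamma_{S_\phi} \hat a = d_0 f$ and the pair $\langle \hat a, f\rangle$ defines a morphism $Z \to S_\phi \times_{C_0} C_1$. Meanwhile $f_! a := m_X \langle a, f\rangle$ has label $d_1 f = d$, as required. Equivariance of $u$ (expressed by the commutativity of $m_X \circ (u \times 1) = u \circ m_{S_\phi}$) then yields
$$
f_! a \;=\; m_X \langle a, f\rangle \;=\; m_X (u \times 1)\langle \hat a, f\rangle \;=\; u \circ m_{S_\phi} \langle \hat a, f\rangle,
$$
which exhibits a factorization of $f_! a$ through $u$ via $m_{S_\phi}\langle \hat a, f\rangle \colon Z \to S_\phi$. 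By definition of forcing in $[\bb C,\basetopos]$, this is exactly $d \Vdash \phi(f_! a)$.

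The only obstacle is keeping the labels aligned: one must check that $\hat a$ has the correct label to be paired with $f$, and that the resulting factorization of $f_! a$ sits over $d$ rather than some other generalized element of $C_0$. Both are automatic consequences of $u$ being an internal diagram morphism, i.e., of the identities $\gamma_X u = \gamma_{S_\phi}$ and $\gamma_X m_X = d_1 \pi_2$. No further structure from $[\bb C,\basetopos]$ is invoked, which is precisely what allows the external argument to be internalized verbatim.
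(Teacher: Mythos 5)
Your proof is correct and follows essentially the same route as the paper's: both exhibit the factorization of $f_!a$ through $u\colon S_\phi\to X$ via the composite $Z\xrightarrow{\langle\hat a,f\rangle}S_\phi\times_{C_0}C_1\xrightarrow{m_{S_\phi}}S_\phi$, with the equivariance of $u$ as the one substantive ingredient. You are somewhat more explicit than the paper in verifying that the labels over $C_0$ line up (that $\langle\hat a,f\rangle$ is well defined into the fiber product and that $f_!a$ sits over $d$), which the paper elides by ``ignoring the morphisms to the base.''
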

\begin{proof} 
    Here $f_!(a) = m\langle a,f\rangle$ is the result of the action of the element $f$ of $C_1$ on $a$. It needs to be seen that $\phi f_!(a)$ factors through $S_\phi$ as well. This is proved by the diagram (ignoring the morphisms to the base $C_0$) 
        $$\xymatrix{
            & S_\phi\times_{C_0}C_1 \ar[d]_u \ar[r]^-n  & S_\phi \ar[d]^u \ar[r] & 1 \ar[d]^\top \\
            Z \ar[ur]^-{\langle\hat a,f\rangle} \ar[r]_-a &  X\times_{C_0}C_1 \ar[r]_-m  & X \ar[r]_\phi & \Omega
        }$$
    which commutes by construction of $\hat a$, construction of $S_\phi$ and finally the fact that $S_\phi$ and $X$ are both internal diagrams and that $u$ preserves the action of $C_1$.
\end{proof}

With the technical result proven, the forcing semantics of the adjoint modality can be given. First internalize Definition \ref{define:RelForcing} to an induced geometric morphism of diagram toposes.

\begin{define}
    Let $F\colon \bb C\to \bb D$ denote an internal functor in $\basetopos$. An element $a\colon W\to X$ as above \textbf{forces} $\phi\colon X\to \Omega_*$ in state $w$ if $w \Vdash \overline\phi(a)$ holds in $[\bb C,\basetopos]$. Denote this situation by $w\Vdash_* \phi(a)$.
\end{define}

\begin{theo}[Internalized Forcing Semantics] \label{theorem:InternalForcingSemantics} 
    In the notation above, $c\Vdash_* \phi(a)$ holds if, and only if, $d\Vdash_* \phi(f_!a)$ holds for all $f\colon c\to d$.
\end{theo}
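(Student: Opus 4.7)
The plan is to mirror the proof of Lemma \ref{lemma:SemanticsofTotalCosieve} in the internal setting, using Lemma \ref{lemma:InternalTechnicalResult} as a replacement for the explicit sieve computation at the subobject classifier of a presheaf topos. Unwinding the definition of $\Vdash_*$ through the induced geometric morphism $F\colon[\bb C,\basetopos]\to[\bb D,\basetopos]$, the assertion $c\Vdash_*\phi(a)$ is, by definition, equivalent to $c\Vdash \overline\phi(a)$ in $[\bb C,\basetopos]$, where $\overline\phi\colon X^*\to\Omega$ is the transpose $\epsilon\phi^*$. The goal is thus to show that $c\Vdash \overline\phi(a)$ holds if, and only if, $d\Vdash \overline\phi(f_!a)$ holds for every $f\colon c\to d$ in $C_1$, after which each side may be repackaged back through the definition of $\Vdash_*$.

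For the forward direction, Lemma \ref{lemma:InternalTechnicalResult} applies directly to the proposition $\overline\phi$ in $[\bb C,\basetopos]$ and immediately delivers $d\Vdash\overline\phi(f_!a)$ for every $f\colon c\to d$. Re-expressing each occurrence via the definition of $\Vdash_*$ then gives $d\Vdash_*\phi(f_!a)$ for all such $f$, as desired.

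For the converse, I would specialise the universally quantified hypothesis to the identity arrow on $c$, namely $y_c$ obtained from the unit morphism $y\colon C_0\to C_1$. The unit triangle in the definition of an internal diagram forces $(y_c)_!a = m\langle a,y_c\rangle = a$, so at $d=c$ and $f=y_c$ the hypothesis yields exactly $c\Vdash_*\phi(a)$.

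The only genuine obstacle lies in the setup rather than in the logical argument: one must verify that the transpose $\overline\phi$ really is a morphism on the inverse-image diagram $X^*$ in $[\bb C,\basetopos]$ and that the action "$f_!a$" appearing in Lemma \ref{lemma:InternalTechnicalResult} agrees with the $C_1$-action on $X^*$ induced by pulling back the $D_1$-action on $X$ along $F$. Once this bookkeeping about the geometric morphism of internal diagram toposes is in place, the short chain of equivalences above completes the theorem with no further calculation.
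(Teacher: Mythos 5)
Your proposal is correct, and it is worth noting that it is actually more explicit than the paper's own proof, which simply gestures at ``recreating the computation of Lemma \ref{lemma:SemanticsofTotalCosieve} internally using Lemma \ref{lemma:InternalTechnicalResult}.'' The forward direction is identical in both: unwind $\Vdash_*$ to $\Vdash$ for the transpose $\overline\phi=\epsilon\phi^*$ and apply Lemma \ref{lemma:InternalTechnicalResult}. Where you genuinely diverge is the converse. A literal internal recreation of the chain of equivalences in Lemma \ref{lemma:SemanticsofTotalCosieve} would route through the explicit cosieve description of the subobject classifier of $[\bb C,\basetopos]$ (the internal analogue of ``$\chi_C(x)=\mfrk t_C$''), which the paper never develops for internal diagram toposes; that description would deliver both directions at once but at the cost of nontrivial setup. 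Your alternative --- specialising the universally quantified hypothesis to the identity $y_c = y\circ c\colon Z\to C_1$ and invoking the unit axiom $m\langle 1,y\gamma\rangle = 1$ of the diagram action to get $(y_c)_!a=a$ --- needs only data already on the page and is the more elementary argument. You are also right to flag the one piece of real bookkeeping: that $\overline\phi$ is a morphism out of $X^*$ in $[\bb C,\basetopos]$ and that the $C_1$-action written ``$f_!a$'' in the statement is the one induced on $X^*$ by reindexing the $D_1$-action along $F_1$; once that is checked the argument closes.
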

\begin{proof} 
    The computation of Theorem \ref{lemma:SemanticsofTotalCosieve} can be recreated internally using Lemma \ref{lemma:InternalTechnicalResult}.
\end{proof}

\section{Estimate Safety in a Topos}
\label{section:ElementarySafetyInaTopos}

Provided one can countenance a little internal category theory, the forcing semantics of safety take on an especially nice form without requiring passage to internal diagram categories and without appealing to the mechanisms of geometric models. The point of the geometric models development was the revelation of the central of the total cosieve, whose elementary analogue can be developed with some internal category theory.

Throughout work internally with a estimate consensus protocol $e\colon \Sigma\to PC$ in a given fixed topos $\E$ as in Definition \ref{define:abstractCBC}. For any state $w\colon W\to \Sigma_0$, form the object of executions from $w$ as the pullback
    $$\xymatrix{
        \Sigma(w,-) \ar@{->>}[d] \ar[r] & \Sigma_1\ar[d]^{d_0} \\
        W \ar[r]_w & \Sigma_0.
    }$$
This will play the role of the total cosieve on $w$. Notice that since $d_0$ is an epimorphism (it is split by $i$), so is the projection to $W$, as epimorphisms are pullback-stable. In other words, $\Sigma(w,-)$ is a sort of generalized protocol execution defined on $w$. Now take any execution $f\colon w\to v$ on $w$, that is, a generalized element $f\colon X\to \Sigma_1$ with an epimorphism making a commutative square
    $$\xymatrix{
        X \ar@{->>}[d] \ar[r]^f & \Sigma_1 \ar[d]^{d_0} \\
        W \ar[r]_w & \Sigma_0.
    }$$
There is then a unique morphism $\hat f\colon X\to \Sigma(w,-)$ by the universal property of the pullback, interpretable as the statement that $f$ is an element of the ``fiber'' of the reprentable functor at $v\in \Sigma_0$. For any proposition $p\colon 1\to PC$, form the implication $x\Rightarrow p$ for a variable $x:PC$ as the composite
    $$\xymatrix{
        PC\cong PC\times 1 \ar[r]^-{x\times p} & PC\times PC \ar[r]^-{\Rightarrow} & \Omega
    }$$
where `$\Rightarrow$' is the classifying arrow of the order object $(\leq)\to PC\times PC$. There is then the following result, essentially stating that estimate safety is equivalent to forcing this proposition by the representable $\Sigma(w,-)$. As is customary with presheaf forcing, identify $w$ with the representable $\Sigma(w,-)$ in the forcing notation. That is, write `$w \Vdash x\Rightarrow p$' as a shorthand for `$\Sigma(w,-) \Vdash x\Rightarrow p$'.

\begin{theo} \label{theorem:ElementaryForcingSemanticsOfSafety}
    A proposition $p\colon 1\to PC$ is safe in state $w$ if, and only if, $w \Vdash (x\Rightarrow p)(ed_1)$.
\end{theo}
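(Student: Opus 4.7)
The plan is to show that both sides of the equivalence unpack to the single assertion that the composite $ed_1\colon \Sigma(w,-)\to PC$ (the pullback projection into $\Sigma_1$ postcomposed with $e\circ d_1$) factors through the subobject $S_{x\Rightarrow p}\hookrightarrow PC$ classified by $x\Rightarrow p\colon PC\to \Omega$. Because $\Rightarrow$ is the classifier of the internal order object $(\leq)\to PC\times PC$, this subobject is the internal comprehension $\{x:x\leq p\}$, and by Lemma \ref{HeytingLemma1} a generalized element $a\colon X\to PC$ factors through $S_{x\Rightarrow p}$ exactly when $a\Rightarrow p=\top_X$. Unpacking the forcing side by the definition recalled in \S \ref{section:ForcingSemanticsofEstimateSafety}, $w\Vdash (x\Rightarrow p)(ed_1)$ means $(x\Rightarrow p)\circ ed_1=\top_{\Sigma(w,-)}$, that is, $ed_1$ factors through $S_{x\Rightarrow p}$. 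On the safety side (reading $ev$ for the definition's $ew$, in keeping with the classical convention of \S \ref{section:HeytingValuedSafety}), safety asserts the analogous factorization for every generalized execution from $w$, i.e.\ every $f\colon X\to\Sigma_1$ fitting in a commuting square $d_0 f = w\pi$ for some epimorphism $\pi\colon X\twoheadrightarrow W$.

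For the forward direction (forcing implies safety), given any such generalized execution $f\colon X\to\Sigma_1$ from $w$, the universal property of the pullback defining $\Sigma(w,-)$ supplies a unique $\hat f\colon X\to\Sigma(w,-)$ with $ed_1 f = ed_1\circ \hat f$. Composing $\hat f$ with the assumed factorization $\Sigma(w,-)\to S_{x\Rightarrow p}\hookrightarrow PC$ of $ed_1$ produces the required factorization of $ed_1 f$ through $S_{x\Rightarrow p}$, giving $ev\Rightarrow p = \top_X$ for $v = d_1 f$ and hence safety.

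For the reverse direction (safety implies forcing), the canonical inclusion $\Sigma(w,-)\to\Sigma_1$ is itself a generalized execution from $w$: its projection $\Sigma(w,-)\twoheadrightarrow W$ is epi, as noted in the set-up before the theorem (being the pullback of the split epi $d_0$), and the commuting square against $w$ and $d_0$ is precisely the defining pullback square. Applying safety to this \emph{generic} execution yields directly that $ed_1\colon\Sigma(w,-)\to PC$ factors through $S_{x\Rightarrow p}$, which is the forcing statement. The main obstacle is conceptual rather than computational: one must recognise $\Sigma(w,-)$ as the universal generalized execution at $w$, so that forcing against it captures the full quantifier ``for every execution $f\colon w\to v$'' appearing in the definition of safety. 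This is precisely the elementary analogue of the ``total cosieve'' semantics of Lemma \ref{lemma:SemanticsofTotalCosieve}, and once it is in place no further appeal to geometric morphisms or diagram toposes is required.
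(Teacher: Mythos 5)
Your proposal is correct and follows essentially the same route as the paper's proof: both directions rest on recognising $\Sigma(w,-)$ as the generic execution from $w$ (so safety applied to it yields the factorization through $S_{x\Rightarrow p}$) and on factoring an arbitrary execution $f$ through $\Sigma(w,-)$ via the pullback's universal property for the converse. Your parenthetical correction of $ew$ to $ev$ in Definition \ref{definitionToposSafety} is also the intended reading.
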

\begin{proof} First show necessity. Since $\Sigma(w,-)$ is an execution defined on $w$, the morphism $e d_1\pi_2\colon \Sigma(w,-)\to PC$ satisfies $d_1e\pi_2 \Rightarrow p = \top$ by the hypothesis of safety. Thus, there is a factorization
    $$\xymatrix{
        & S_{x\Rightarrow p} \ar[d] \ar[r]  & 1 \ar[d]^\top  \\
        \Sigma(w,-) \ar@{-->}[ur] \ar[r]_-{ed_1\pi_2} &  PC \ar[r]_{x\Rightarrow p}  & \Omega
    }$$
since the square is a pullback, proving the forcing statement. On the other hand, for sufficiency, assume the forcing statement. In particular, $ed_1\pi_2 \Rightarrow p = \top$ holds. Let $f\colon w\to v$ be any execution on $w$. Then by definition, $f$ factors through $\Sigma(w,-)$ via a unique map $\hat f\colon X \to \Sigma(w,-)$ satisfying in particular $\pi_2\hat f = f$. Thus, compute that 
    \begin{align}
        \top &= ed_1\pi_2\hat f \Rightarrow p \notag \\
             &= ed_1f \Rightarrow p \notag \\
             &= ev \Rightarrow p \notag
    \end{align}
proving that $p$ is safe in $w$.
\end{proof}

However, in light of Theorem \ref{theorem:ElementaryForcingSemanticsOfSafety}, write `$w\Vdash x\Rightarrow p$' to indicate that $p$ is safe in $w$. The practical upshot is that the well-known forcing semantics (cf. Theorem VI.6.1 of \cite{MM}) of usual logical connectives can be used to prove safety results. In particular, for any implication statement $\phi\Rightarrow \psi$ with $\phi,\psi\colon X\rightrightarrows \Omega$ and $a\colon U\to X$, there is the equivalence
    \begin{equation} \label{equation:ForcingSemanticsofImplication}
        U\Vdash \phi(a) \Rightarrow \psi(a) \text{ if, and only if, } V\Vdash \phi(af) \text{ implies } V\Vdash \psi(af) \text{ for all } f\colon V\to U.
    \end{equation}
As an immediate consequence, there is the next result.

\begin{cor}
    A proposition $p$ is safe in state $w$ if, and only if, $v\Vdash x(ev)$ implies that $v\Vdash p$ for all executions $f\colon w\to v$ on $w$.
\end{cor}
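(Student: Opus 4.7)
The plan is to deduce the corollary as a direct application of the forcing semantics of implication (equation \ref{equation:ForcingSemanticsofImplication}) to the characterization of safety just proved in Theorem \ref{theorem:ElementaryForcingSemanticsOfSafety}. In more detail, by that theorem $p$ is safe in $w$ if and only if $w\Vdash (x\Rightarrow p)(ed_1)$, where the implicit substitution for the variable $x$ of type $PC$ is the morphism $ed_1\pi_2\colon \Sigma(w,-)\to PC$.

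I would then feed this forcing statement into \ref{equation:ForcingSemanticsofImplication} with $U = \Sigma(w,-)$, $a = ed_1\pi_2$, and the two predicates being the variable $x$ itself and the constant $p$ on $PC$. The right-hand side of \ref{equation:ForcingSemanticsofImplication} then reads: for every $f\colon V\to \Sigma(w,-)$, $V \Vdash x(ed_1\pi_2 f)$ implies $V\Vdash p(ed_1\pi_2 f)$. Since $p$ carries no free occurrence of $x$, the consequent collapses to $V\Vdash p$. What remains is to reinterpret the universal quantifier over such maps $f$: by the universal property of the pullback defining $\Sigma(w,-)$, these are exactly generalized executions $f\colon w\to v$ on $w$, where $v = d_1\pi_2 f\colon V\to \Sigma_0$ is the target state, and under this identification $ed_1\pi_2 f = ev$. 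Writing $V$ simply as $v$ in accordance with the representable-identification convention used throughout \S\ref{section:ElementarySafetyInaTopos} yields precisely the statement of the corollary.

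The one place that requires care — really a bookkeeping point rather than an obstacle — is the identification of arrows $f\colon V\to \Sigma(w,-)$ with generalized executions emanating from $w$. But this is already the content of the universal-property argument invoked in the proof of Theorem \ref{theorem:ElementaryForcingSemanticsOfSafety}, so no new work is needed. The corollary is therefore essentially a transcription of \ref{equation:ForcingSemanticsofImplication} into the language of safety and executions.
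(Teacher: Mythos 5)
Your proposal is correct and follows the same route as the paper: interpret $p$ as a predicate on $PC$ via the unique map $PC\to 1$, then combine Theorem \ref{theorem:ElementaryForcingSemanticsOfSafety} with the implication semantics in \ref{equation:ForcingSemanticsofImplication}. Your write-up just makes explicit the bookkeeping (the substitution $a = ed_1\pi_2$ and the identification of maps $V\to\Sigma(w,-)$ with executions on $w$) that the paper's two-line proof leaves implicit.
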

\begin{proof}
    Interpret $p\colon 1\to PC$ as defined on $PC$ by composing with the unique map $PC\to 1$. Then use Theorem \ref{theorem:ElementaryForcingSemanticsOfSafety} and the equivalence \ref{equation:ForcingSemanticsofImplication}.
\end{proof}

\begin{cor}[Persistence] \label{lemma:ElementaryPersistenceLemma}
    If $w\Vdash x \Rightarrow p$ holds and $p\Rightarrow q = \top$, then $w\Vdash x\Rightarrow q$ holds too.
\end{cor}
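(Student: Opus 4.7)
The plan is to reduce the corollary to results already in hand by using Theorem \ref{theorem:ElementaryForcingSemanticsOfSafety} as a bridge between the forcing statement and the original definition of estimate safety. First, the hypothesis $w\Vdash x\Rightarrow p$ is, by Theorem \ref{theorem:ElementaryForcingSemanticsOfSafety}, just a restatement that $p$ is safe in $w$ in the sense of Definition \ref{definitionToposSafety}.

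Next, Proposition \ref{PersistenceLemma2} (the topos-theoretic Persistence Proposition) directly applies: given $p\Rightarrow q =\top$ and the fact that $p$ is safe in $w$, one concludes that $q$ is safe in $w$. The proof of that proposition uses only Lemma \ref{HeytingLemma1} together with transitivity of the internal order on $PC$, so no additional machinery is required. Finally, invoking Theorem \ref{theorem:ElementaryForcingSemanticsOfSafety} once more, but in the reverse direction, translates the conclusion that $q$ is safe in $w$ back into the desired forcing statement $w\Vdash x\Rightarrow q$.

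As an alternative route one could bypass Definition \ref{definitionToposSafety} altogether and argue purely in forcing terms via the preceding corollary. Unfolding $w\Vdash x\Rightarrow p$ by that corollary gives: for every execution $f\colon w\to v$, whenever $v\Vdash x(ev)$ one has $v\Vdash p$. Since $p\Rightarrow q=\top$ is equivalent to $p\leq q$ in $PC$ by Lemma \ref{HeytingLemma1}, $v\Vdash p$ implies $v\Vdash q$ (this is just the monotonicity of forcing through a subobject inclusion in $\E$). Reassembling via the preceding corollary then yields $w\Vdash x\Rightarrow q$.

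I do not anticipate any real obstacle. The only mild subtlety worth mentioning explicitly is that the implication $p\Rightarrow q$ in the hypothesis is computed in the internal Heyting algebra $PC$ rather than in $\Omega$, so one must take care to cite Lemma \ref{HeytingLemma1} for the passage between $p\Rightarrow q=\top$ and $p\leq q$; but this step is routine and is already absorbed into Proposition \ref{PersistenceLemma2}, which is why the first route above is the most economical.
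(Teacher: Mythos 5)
Your proof is correct, and your ``alternative route'' via the preceding corollary is exactly the paper's own one-line argument (``use the previous corollary and transitivity of implication''). Your primary route, detouring through Theorem \ref{theorem:ElementaryForcingSemanticsOfSafety} and Proposition \ref{PersistenceLemma2}, is an equivalent repackaging of the same content, so there is nothing substantive to flag.
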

\begin{proof}
    Use the previous corollary and transitivity of implication.
\end{proof}

Now, the preliminaries of \S \ref{section:HeytingValuedSafety} can be reproved in the present context.

\begin{lemma}[Forward Consistency] 
    For any execution $f\colon w\to v$, if $p$ is safe in $w$, then $p$ is safe in $v$. That is, in notation, if $w\Vdash (x\Rightarrow p)$ then $v\Vdash (x\Rightarrow p)$.
\end{lemma}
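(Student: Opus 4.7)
The plan is to mimic the classical argument of Lemma \ref{forwardconsistencylemma}: compose the given execution $f\colon w\to v$ with an arbitrary execution on $v$ to produce an execution on $w$, so that safety of $p$ at $w$ delivers the conclusion. In the topos-internal setting the composition is performed via the internal composition morphism $\otimes\colon \Sigma_1\times_{\Sigma_0}\Sigma_1 \to \Sigma_1$ of $\Sigma$ rather than in an external hom-set.

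More concretely, with $f\colon X\to \Sigma_1$ representing $w\to v$ and $v = d_1 f\colon X\to \Sigma_0$ defined over $X$, any execution $g\colon v\to v'$ on $v$ is a generalized element $g\colon Y\to \Sigma_1$ together with an epimorphism $\pi\colon Y\twoheadrightarrow X$ satisfying $d_0 g = d_1 f\circ \pi$. First I would pull $f$ back along $\pi$, obtaining $f\pi\colon Y\to \Sigma_1$ for which $d_1(f\pi) = d_0 g$; this matches the sources and targets demanded by $\otimes$, producing a composite $h\colon Y\to \Sigma_1$.

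Next I would verify two diagrammatic facts using only the internal-category axioms of $\Sigma$: (i) $d_0 h = d_0(f\pi) = w\circ \pi_W\pi$, where $\pi_W\colon X\twoheadrightarrow W$ witnesses that $f$ is an execution on $w$; since epimorphisms compose, $\pi_W\pi\colon Y\twoheadrightarrow W$ is again epi, so $h$ is an execution on $w$. And (ii) $d_1 h = d_1 g = v'$. Safety of $p$ at $w$ then yields $ed_1 h\Rightarrow p = \top$, which by (ii) is $ev'\Rightarrow p = \top$. Since $g$ was arbitrary, $p$ is safe at $v$.

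The hard part, such as it is, is purely bookkeeping: keeping track of which generalized elements live over which base, checking that the needed pullback exists in $\E$, and confirming that the pullback-then-compose construction outputs an execution on $w$ rather than on some other state. No ingredient beyond the internal-categorical structure of $\Sigma$ and Definition \ref{definitionToposSafety} is required. Alternatively, one could route everything through the forcing reformulation of Theorem \ref{theorem:ElementaryForcingSemanticsOfSafety}, but the direct composition argument is more transparent and avoids a second appeal to the total-cosieve object $\Sigma(w,-)$.
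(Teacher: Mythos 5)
Your proof is correct, but it takes a genuinely different route from the paper's. You internalize the classical composition argument of Lemma \ref{forwardconsistencylemma} directly: pull $f$ back along the epimorphism underlying an arbitrary execution $g$ on $v$, form the internal composite $h=\otimes\langle f\pi, g\rangle$, verify from the internal-category axioms that $h$ is an execution on $w$ with $d_1h = d_1g$, and then invoke Definition \ref{definitionToposSafety} at $w$. The paper instead stays entirely on the forcing side: an execution $f\colon w\to v$ induces a morphism $f^*\colon \Sigma(v,-)\to\Sigma(w,-)$ by the internalized fibered Yoneda lemma, and $v\Vdash(x\Rightarrow p)$ then follows from $w\Vdash(x\Rightarrow p)$ immediately by forcing stability (Lemma \ref{lemma:ForcingStability}). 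Your version is more self-contained --- it needs only the structure maps of $\Sigma$ and the definition of safety, not the representable $\Sigma(w,-)$ or a second appeal to Theorem \ref{theorem:ElementaryForcingSemanticsOfSafety} --- at the cost of the generalized-element bookkeeping you acknowledge (existence of the pairing into $\Sigma_1\times_{\Sigma_0}\Sigma_1$, the orientation convention for $\otimes$, and the composite of the two epimorphisms). The paper's version is shorter and reinforces its theme that safety \emph{is} a forcing statement, reducing forward consistency to the single general fact that forcing is stable under restriction. One small point: since the lemma is stated in forcing notation, your argument implicitly passes through the equivalence of Theorem \ref{theorem:ElementaryForcingSemanticsOfSafety} to translate the conclusion back into the form $v\Vdash(x\Rightarrow p)$; it is worth saying so explicitly.
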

\begin{proof} 
    Any execution $f\colon w\to v$ induces a morphism $f^*\colon \Sigma(v,-)\to\Sigma(w,-)$ and conversely. This is basically the internalized fibered Yoneda lemma (cf. \S B2.7 \cite{ElephantI} for example) although it can be worked out by hand using universal properties of pullbacks. By forcing stability \ref{lemma:ForcingStability}, $v\Vdash (x\Rightarrow p)$ then holds.
\end{proof}

\begin{lemma}[Current Consistency] 
    If $p$ is safe in $w$, then $\neg p$ is not safe in $w$. That is, if $w\Vdash (x\Rightarrow p)$ holds then $\sim (w\Vdash  (x \Rightarrow \neg p))$.
\end{lemma}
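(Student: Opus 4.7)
The plan is to imitate the Heyting-valued proof of Lemma~\ref{currentconsistencylemma} within the elementary topos $\E$, combining the elementary forcing semantics of Theorem~\ref{theorem:ElementaryForcingSemanticsOfSafety} with the estimator condition \ref{equation:EstimatorCondition} from Definition~\ref{define:abstractCBC}.

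First I would unpack the safety hypothesis $w \Vdash (x\Rightarrow p)(ed_1)$. By the sufficiency direction of Theorem~\ref{theorem:ElementaryForcingSemanticsOfSafety}, this is equivalent to the assertion that $ev \Rightarrow p = \top$ holds for every execution $f\colon w\to v$ on $w$. I would then specialize to the identity execution on $w$, namely $yw\colon W\to \Sigma_1$, which qualifies as an execution $w\to w$ since $d_0 y = 1_{\Sigma_0} = d_1 y$ ensures the required square commutes with the identity epimorphism $W\twoheadrightarrow W$. Instantiating the universally quantified statement at this execution yields $ew \Rightarrow p = \top$.

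Next I would invoke the estimator condition to conclude $\neg (ew \Rightarrow \neg p) = \top$. For the desired conclusion, I would argue by contradiction: suppose that $\neg p$ were also safe in $w$, i.e. $w\Vdash (x\Rightarrow \neg p)(ed_1)$. The identity-execution trick would then give $ew \Rightarrow \neg p = \top$, whence $\neg (ew \Rightarrow \neg p) = \neg \top = \bot$. Together with the $\top$ supplied by the estimator condition, this forces $\top = \bot$ in $\Omega$, contradicting non-degeneracy of $\E$.

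The main obstacle is the bookkeeping required to recognise $yw\colon W\to \Sigma_1$ as a bona fide execution in the sense used by Theorem~\ref{theorem:ElementaryForcingSemanticsOfSafety}---that is, verifying it comes with the required epimorphism to $W$ and is compatible with $d_0$---so that the universally quantified forcing statement can be instantiated at the identity. Once that identification is in place, the remainder of the argument transplants the Heyting-valued proof of Lemma~\ref{currentconsistencylemma} essentially verbatim, since implication and negation in $\E$ are arrows into and within $\Omega$ obeying the same algebraic identities as their Heyting analogues.
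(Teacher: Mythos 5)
Your proposal is correct and follows essentially the same route as the paper's proof: both instantiate the safety hypothesis at the identity execution $yw$ to get $ew\Rightarrow p=\top$, invoke the estimator condition to obtain $\neg(ew\Rightarrow\neg p)=\top$, and conclude that $ew\Rightarrow\neg p=\top$ is impossible (the paper phrases this by contraposition, you by assuming $\neg p$ safe and deriving $\top=\bot$; since the goal is a metatheoretic negation, your reductio is intuitionistically legitimate). The only point worth making explicit in either version is that the final step tacitly uses non-degeneracy of $\E$, i.e.\ $\bot\neq\top$ in $\sub(1)$.
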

\begin{proof} 
    If $w\Vdash (x\Rightarrow \neg p)$ holds, then in particular $ew \leq \neg p$ does too. Consequently, by contraposition, if $ew\nleq \neg p$ holds, then $\sim (w\Vdash (x\Rightarrow \neg p))$. In general if $ew \leq p$, then $ew\nleq \neg p$ by assumption on $e$. Therefore, since $w\Vdash (x\Rightarrow p)$ implies in particular that $ew\leq p$, the conclusion follows.  
\end{proof}

\begin{lemma}[Backward Consistency] 
    For all executions $f:w\to v$, if $p$ is safe in $v$, then $\neg p$ is not safe in $w$. That is, if $v \Vdash  ( x\Rightarrow p)$ holds, then $\sim w \Vdash (x\Rightarrow\neg p)$.
\end{lemma}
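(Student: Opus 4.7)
The plan is to mirror the proof of Lemma \ref{backwardconsistencylemma} essentially verbatim, substituting in the elementary/topos-theoretic versions of Forward Consistency, Current Consistency, and Persistence (Corollary \ref{lemma:ElementaryPersistenceLemma}) just established in this section. All the real work has already been done; what remains is a short intuitionistic chase in the metalanguage.

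First I would apply elementary Forward Consistency to the proposition $\neg p$ along the execution $f\colon w\to v$ to obtain the implication $(w\Vdash x\Rightarrow \neg p) \supset (v\Vdash x\Rightarrow \neg p)$. Then by elementary Current Consistency applied at the state $v$ to $\neg p$ itself, I get $(v\Vdash x\Rightarrow \neg p) \supset \sim(v\Vdash x\Rightarrow \neg\neg p)$. Composing these yields
\[
    (w\Vdash x\Rightarrow \neg p) \supset \sim(v\Vdash x\Rightarrow \neg\neg p),
\]
and applying intuitionistic contraposition gives
\[
    \sim\sim(v\Vdash x\Rightarrow \neg\neg p) \supset \sim(w\Vdash x\Rightarrow \neg p).
\]

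Next I would handle the hypothesis. By Lemma \ref{HeytingLemma2}, $p\leq \neg\neg p$ holds in $PC$, equivalently $p\Rightarrow \neg\neg p = \top$. So Corollary \ref{lemma:ElementaryPersistenceLemma} (elementary Persistence) gives $(v\Vdash x\Rightarrow p) \supset (v\Vdash x\Rightarrow \neg\neg p)$. Since intuitionistically any statement $P$ entails $\sim\sim P$, this extends to $(v\Vdash x\Rightarrow p) \supset \sim\sim(v\Vdash x\Rightarrow \neg\neg p)$. Chaining with the contraposed display above produces the required conclusion
\[
    (v\Vdash x\Rightarrow p) \supset \sim(w\Vdash x\Rightarrow \neg p).
\]

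I do not expect any real obstacle: the Heyting-algebraic inequality $p\leq\neg\neg p$ holds internally in $PC$ because $PC$ is an internal Heyting algebra, so Persistence applies without modification, and the metalogical steps (contraposition, double-negation introduction, transitivity of implication) are precisely the intuitionistic principles already used in Section \ref{section:HeytingValuedSafety}. The only sanity check worth flagging is that the elementary versions of Forward and Current Consistency are phrased in terms of the forcing relation $w\Vdash x\Rightarrow p$ rather than $S(p,w)$, so one must be careful that the substitutions $p\mapsto\neg p$ and $p\mapsto\neg\neg p$ make sense as morphisms $1\to PC$; this is unproblematic since $PC$ is an internal Heyting algebra in $\E$ and hence $\neg$ is a well-defined unary operation on its global elements.
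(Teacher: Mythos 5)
Your proposal is correct and follows essentially the same route as the paper's own proof: chain elementary Forward Consistency and Current Consistency applied to $\neg p$, contrapose intuitionistically, and discharge the hypothesis via elementary Persistence using $p\leq\neg\neg p$ together with double-negation introduction. Your closing remark about $\neg$ being well-defined on global elements $1\to PC$ is a reasonable sanity check that the paper leaves implicit.
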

\begin{proof} 
    By the previous two lemmas, $w \Vdash (x\Rightarrow \neg p)$ implies that $\sim v\Vdash (x\Rightarrow \neg\neg p)$. Therefore, by contraposition, $\sim\sim(v\Vdash (x\Rightarrow \neg\neg p))$ implies that $\sim (w \Vdash (x\Rightarrow \neg p))$. But note that $v \Vdash (x\Rightarrow p)$ implies that $v \Vdash (x\Rightarrow \neg\neg p)$ by Corollary \ref{lemma:ElementaryPersistenceLemma} since $p\leq \neg\neg p$ always holds. But this implies that $\sim\sim(v \Vdash (x\Rightarrow \neg\neg p))$ holds too. Putting together these implications, the result then follows. 
\end{proof}

The main result of the paper is now the following.

\begin{theo}[Estimate Safety] \label{theoToposSafety}
Inconsistent propositions are not safe at related states. That is, if $p\wedge q = \bot$ and $w_1\simeq w_2$ both hold, then it is not the case that both $w_1\Vdash (x\Rightarrow p)$ and $w_2\Vdash (x\Rightarrow q)$ hold. 
\end{theo}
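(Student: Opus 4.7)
The plan is to reproduce, almost verbatim in the elementary forcing language, the proof of Theorem \ref{THEOREM1} from \S \ref{section:HeytingValuedSafety}. All of the ingredients have already been rebuilt in the current section: elementary versions of Forward Consistency, Current Consistency, Backward Consistency, and the Persistence Corollary \ref{lemma:ElementaryPersistenceLemma} are in place, and by Theorem \ref{theorem:ElementaryForcingSemanticsOfSafety} we may move freely between ``$p$ is safe in $w$'' and the forcing statement $w \Vdash (x \Rightarrow p)$.

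First I would unpack compatibility: there is a common future state $w_3$ with executions $w_1 \to w_3 \leftarrow w_2$. Assuming $w_1 \Vdash (x \Rightarrow p)$, Forward Consistency along $w_1 \to w_3$ gives $w_3 \Vdash (x \Rightarrow p)$, and then Backward Consistency along $w_2 \to w_3$ yields $\sim (w_2 \Vdash (x \Rightarrow \neg p))$. Separately, from the hypothesis $p \wedge q = \bot$, Lemma \ref{HeytingLemma2} gives $q \leq \neg p$, equivalently $q \Rightarrow \neg p = \top$. Then the elementary Persistence (Corollary \ref{lemma:ElementaryPersistenceLemma}) applied to $q \Rightarrow \neg p$ gives the implication $w_2 \Vdash (x \Rightarrow q) \supset w_2 \Vdash (x \Rightarrow \neg p)$, and by intuitionistic contraposition $\sim (w_2 \Vdash (x \Rightarrow \neg p)) \supset \sim (w_2 \Vdash (x \Rightarrow q))$.

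Chaining these implications yields $w_1 \Vdash (x \Rightarrow p) \supset \sim (w_2 \Vdash (x \Rightarrow q))$. Finally, unfolding negation exactly as in the last lines of the proof of Theorem \ref{THEOREM1} (the equivalence $P \supset (Q \supset \bot) \equiv (P \wedge Q) \supset \bot$ from Lemma \ref{HeytingLemma1}(2)), this is the same as $\sim (w_1 \Vdash (x \Rightarrow p) \, \& \, w_2 \Vdash (x \Rightarrow q))$, which is the claim.

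I do not expect any serious obstacle: the only thing to be careful about is that each cited preliminary (Forward, Backward, Persistence) has indeed been stated in its forcing form for arbitrary generalized states $w : W \to \Sigma_0$ and for the internal estimator $e \colon \Sigma \to PC$, so no further internalization is required. The argument uses only intuitionistic metareasoning (contraposition and the double-negation introduction $P \supset \sim\sim P$), matching the hypotheses under which the preliminary lemmas were proved. Everything else is bookkeeping: rewriting the common-future quantifier and keeping track of which state each forcing statement lives at.
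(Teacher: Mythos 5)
Your proposal is correct and follows essentially the same route as the paper's own proof: reduce $p\wedge q=\bot$ to $q\Rightarrow\neg p=\top$, chain Forward and Backward Consistency through the common future $w_3$, apply Persistence with contraposition at $w_2$, and conclude. The only (cosmetic) difference is at the very end, where you convert $P\supset{\sim}Q$ into $\sim(P\,\&\,Q)$ via the intuitionistic equivalence as in Theorem \ref{THEOREM1}, whereas the paper instead appeals to the symmetry of the argument; both finishes are valid.
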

\begin{proof} 
    By Lemma \ref{HeytingLemma2}, $p\wedge q = \bot$ is equivalent to $q\leq \neg p$ which is equivalent to $q\Rightarrow \neg p=\top$. Now, by Forward Consistency and Backward Consistency, the implications
        \begin{equation} \label{equation1'}
            w_1\Vdash (x\Rightarrow p) \supset w_3\Vdash (x\Rightarrow p)
                                   \supset \sim(w_2\Vdash (x\Rightarrow\neg p))
        \end{equation} 
    hold. By the Persistence Lemma \ref{PersistenceLemma2} applied to $q\Rightarrow \neg p$ and contraposition,
        \begin{equation} \label{equation2'}
            (\sim w_2\Vdash(x\Rightarrow \neg p)) \supset (\sim w_2\Vdash (x\Rightarrow q)).
        \end{equation} 
    Therefore, putting together the implications in \ref{equation1'} and \ref{equation2'},
        \[ 
            (w_1\Vdash (x\Rightarrow p)) \supset (\sim w_2\Vdash (x\Rightarrow q))
        \]
holds metatheoretically. Thus, if $p$ is safe in $w_1$ then $q$ cannot be safe in $w_2$. The argument is perfectly symmetric, hence the roles of $p$ and $q$ and of $w_1$ and $w_2$ can be interchanged. Therefore, not both $w_1\Vdash (x\Rightarrow p)$ and $w_2\Vdash (x\Rightarrow q)$ as required.
\end{proof}

\section{Decided Propositions in a Topos}
\label{section:DecidedPropositionsInaTopos}

Decided properties of protocol states are considered for example in \cite{ZamAndFam}. Once safety is established, protocol states can be decided. Here the notion is formalized in a topos $\basetopos$.

\begin{define} \label{DefnDecided}
    A property $p\colon \Sigma \to \Omega$ is \textbf{decided} for a state $w\colon W\to \Sigma$ if it is valid in all future states accessible from $w$. That is, $p$ is decided for $w$ if for all $w\to v$, it follows that $pv = \top$.
\end{define}

In this set up, a forcing interpretation of decided propositions is a special case of that for estimate safety in Theorem \ref{theorem:ElementaryForcingSemanticsOfSafety}. Recall that `$w$' on the left of `$\Vdash$' is a shorhand for $\Sigma(w,-)$.

\begin{cor}
    A proposition $p$ is decided in $w$ if, and only if, $w\Vdash p$ holds.
\end{cor}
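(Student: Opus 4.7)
The plan is to mimic the structure of the proof of Theorem \ref{theorem:ElementaryForcingSemanticsOfSafety}, but in the simpler situation where the proposition is directly a predicate on $\Sigma_0$ rather than the compound $x \Rightarrow p$ on $PC$. First I would unpack the shorthand: consistent with the convention introduced just before Theorem \ref{theorem:ElementaryForcingSemanticsOfSafety}, `$w \Vdash p$' means that the universal execution on $w$ forces $p$, i.e.\ that $\Sigma(w,-) \Vdash p(d_1\pi_2)$, where $d_1\pi_2\colon \Sigma(w,-)\to \Sigma_0$ reads off the codomain of the generic execution out of $w$. Concretely this means $p\circ d_1\pi_2 = \top_{\Sigma(w,-)}$.

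For the necessity direction (decided implies forcing), I would observe that $\Sigma(w,-)$, equipped with its epimorphism to $W$ and its projection $\pi_2$ into $\Sigma_1$, is itself a generalized execution defined on $w$. Hence Definition \ref{DefnDecided} applied to this generic execution says exactly that $p(d_1\pi_2) = \top_{\Sigma(w,-)}$, which by the factorization diagram through the pullback defining $S_p$ (as in the proof of Theorem \ref{theorem:ElementaryForcingSemanticsOfSafety}) is the forcing statement $w \Vdash p$.

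For sufficiency (forcing implies decided), I would take an arbitrary execution $f\colon w \to v$, meaning a generalized element $f\colon X \to \Sigma_1$ together with an epi $X \twoheadrightarrow W$ making the square with $d_0$ and $w$ commute. By the universal property of the pullback defining $\Sigma(w,-)$, there is a unique $\hat f\colon X \to \Sigma(w,-)$ with $\pi_2\hat f = f$. Then composing, $p\circ v = p\circ d_1 f = p\circ d_1\pi_2 \hat f = \top_{\Sigma(w,-)} \circ \hat f = \top_X$, so $p$ is valid at $v$. This is precisely that $p$ is decided at $w$.

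The argument is essentially a specialisation of Theorem \ref{theorem:ElementaryForcingSemanticsOfSafety}, with $x \Rightarrow p$ replaced by $p$ itself and the estimator $e$ bypassed, so there is no real obstacle; the only point to handle with care is the identification of the generic execution $(\Sigma(w,-), \pi_2)$ as a bona fide execution on $w$ in the sense of Definition \ref{DefnDecided} (where the domain $X$ of the execution may be any object, not just $\Sigma_0$), and the matching of notation so that `$w \Vdash p$' unambiguously refers to forcing along $d_1\pi_2$.
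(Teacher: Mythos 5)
Your proof is correct, and it reaches the corollary by a slightly different route than the paper. The paper does not re-run any diagram chase: it formally \emph{instantiates} Theorem \ref{theorem:ElementaryForcingSemanticsOfSafety}, taking $C=1$ so that $PC=P1=\Omega$, letting $p$ play the role of the estimator $e$, and using the identification of $p$ with an implication formula (``$p=p\Rightarrow\top$'') to convert ``decided'' into ``safe''; the body of the proof is then a four-line chain of equivalences citing that theorem. You instead specialise the \emph{proof} of that theorem rather than its statement: you exhibit $\Sigma(w,-)$ with its projection $\pi_2$ as the generic execution out of $w$ (which gives necessity directly from Definition \ref{DefnDecided}), and you factor an arbitrary execution $f\colon w\to v$ through $\Sigma(w,-)$ via the unique $\hat f$ with $\pi_2\hat f=f$ (which gives sufficiency via $pv=pd_1\pi_2\hat f=\top$). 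Both arguments rest on exactly the same mechanism---forcing over the representable $\Sigma(w,-)$ and the universal property of that pullback---so nothing essentially new is introduced. What your version buys is self-containedness and transparency: the paper's reduction hinges on the rather terse matching of $p$ against the formula $x\Rightarrow p$ in $P1$, which your direct computation simply bypasses, at the mild cost of repeating the factorization argument. The point you single out for care---that $(\Sigma(w,-),\pi_2)$ is a bona fide generalized execution on $w$ because the projection to $W$ is an epimorphism---is precisely the observation already made in the proof of Theorem \ref{theorem:ElementaryForcingSemanticsOfSafety}, so it is available without further work.
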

\begin{proof}
    The proposition $p$ plays the role of $e$ in Theorem \ref{theorem:ElementaryForcingSemanticsOfSafety} since $P1 = \Omega$ and $p = p\Rightarrow\top$. In other words, the equivalences
        \begin{align}
            w\Vdash p &\equiv w\Vdash p \Rightarrow \top \notag \qquad &\text{($p=p\Rightarrow\top$)} \\
                        &\equiv v\Vdash p \Rightarrow \top \text{ for all } w\to v \notag \qquad &\text{(Theorem \ref{theorem:ElementaryForcingSemanticsOfSafety})} \\
                        &\equiv v\Vdash p \text{ for all } w\to v \notag \qquad &\text{($p=p\Rightarrow\top$)} \\
                        &\equiv pv =\top \text{ for all } w\to v \notag \qquad &\text{(def `$\Vdash$')} 
        \end{align}
    prove the result.
\end{proof}

As a result, write `$w\Vdash p$' to indicate that $p$ is decided in $w$. Theorem \ref{theoToposSafety} implies that inconsistent propositions cannot be decided in states with a common future. 

\begin{cor}
    Inconsistent state-propositions cannot both be decided in related protocol states. That is, if two states $w_1$ and $w_2$ have a common future state and if $p\wedge q =\bot$ holds, then $p$ and $q$ cannot both be decided in states $w_1$ and $w_2$, respectively, that is, $w_1\Vdash p$ and $w_2\Vdash q$ cannot both hold.
\end{cor}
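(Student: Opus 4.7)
The plan is to reduce the claim to the preceding corollary, which recasts decidedness as the pointwise condition $p\circ v=\top_V$ along every accessible execution, and then to play this against the disjointness hypothesis $p\wedge q=\bot$ at the common future state. This is essentially a specialization of Theorem \ref{theoToposSafety} to the degenerate case $C=1$, $PC=\Omega$, in which the identification $p=(p\Rightarrow\top)$ noted in the proof of the previous corollary makes decidedness a forcing statement of exactly the same shape as estimate safety.

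Concretely, I would assume $w_1\Vdash p$ and $w_2\Vdash q$, and use the compatibility assumption to fix $w_3\colon W_3\to\Sigma_0$ together with executions $f_1\colon w_1\to w_3$ and $f_2\colon w_2\to w_3$. Applying the preceding corollary to each assumption along $f_1$ and $f_2$ yields $p\circ w_3=\top_{W_3}=q\circ w_3$. Since precomposition with $w_3$ preserves the internal meet on $\Omega$, one then computes
\[
(p\wedge q)\circ w_3=(p\circ w_3)\wedge (q\circ w_3)=\top_{W_3},
\]
whereas the hypothesis $p\wedge q=\bot_{\Sigma_0}$ forces the same composite to equal $\bot_{W_3}$.

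The only subtlety, and the main obstacle worth flagging in an arbitrary topos, is that $\top_{W_3}=\bot_{W_3}$ gives an outright contradiction only when $W_3$ is not the initial object of $\E$. This is the familiar nondegeneracy tacit in treating $w_3$ as an actual protocol state (and is automatic whenever $W_3=1$ in any non-degenerate topos); ruling out this degenerate case closes the argument. Equivalently, one may simply view the computations above as demonstrating that the two forcing statements $w_1\Vdash p$ and $w_2\Vdash q$ cannot hold simultaneously for disjoint propositions, which is the stated conclusion.
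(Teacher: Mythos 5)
Your argument is correct, but it takes a genuinely more direct route than the paper, whose entire proof is to observe that $w\Vdash p$ is $w\Vdash (p\Rightarrow\top)$ and then cite Theorem \ref{theoToposSafety}. You instead evaluate both decided propositions at the common future state via the pointwise characterization in the preceding corollary, obtaining $p\circ w_3=\top_{W_3}=q\circ w_3$, and then contradict $p\wedge q=\bot$ by naturality of the meet. This buys two things. First, it bypasses the forward/backward consistency machinery behind Theorem \ref{theoToposSafety}, and in particular the estimator condition \ref{equation:EstimatorCondition} that machinery silently requires of whatever plays the role of $e$; it is not immediate that an arbitrary property $p\colon\Sigma\to\Omega$, serving as its own estimator with $C=1$, satisfies that condition, nor how two \emph{distinct} properties $p$ and $q$ fit into the single-estimator, two-proposition format of that theorem, so your direct computation is arguably the more watertight reading of what the paper intends. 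Second, your argument surfaces the one genuine subtlety --- that $\top_{W_3}=\bot_{W_3}$ is absurd only when the stage $W_3$ of the common future state is not initial --- a nondegeneracy assumption the paper leaves tacit throughout its treatment of generalized states $w\colon W\to\Sigma_0$; you flag it but should state explicitly that it is being assumed. What the paper's route buys in exchange is brevity and the structural point that the decided-proposition results are formally special cases of the safety results, which is the organizing theme of that section.
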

\begin{proof}
    Since $w\Vdash p$ is equivalent to $w\Vdash p\Rightarrow \top$, Theorem \ref{theoToposSafety} proves the result.
\end{proof}

Close with a modal interpretation of decided states. This is more natural since $\Sigma\to 1$ should always be an epimorphism. First work over $\set$. Let $\F = [\Sigma,\set]$ and $\Gamma\colon \F \to\set$ denote the canonical global sections geometric morphism with inverse image given by the diagonal presheaf functor (cf. \S I.6 \cite{MM}). There is then an adjoint modality
    $$\xymatrix{
        \Gamma(1,\Omega_\F) \ar[r]^-\tau & \mathbf 2 \ar[r]^-i & \Gamma(1,\Omega_\F)
    }$$
yielding the modal operator $\Box = i\tau$. Note that $i$ is monic. View the terminal object in $\F$ as given by $w\mapsto \lbrace w\rbrace$. Suppose that the proposition $p\colon \Sigma_0\to \mathbf 2$ extends to a functor $\Sigma\to\mathbf 2$. In this case p determines one $p\colon 1\to\Omega_\F$ in $\F$ by the assignments
    \[
        w \mapsto \begin{cases}
            \mfrk t_w\qquad &\text{if } pw=1 \\
            \emptyset \qquad &\text{otherwise}.
        \end{cases}        
    \] 
Interpret $\Box p\colon 1\to \Omega_\F$ as its image under $\Box$. There is then the following result.

\begin{prop}
    A proposition $p\colon 1\to \Omega$ is decided in state $w$ if and only if $w\Vdash_* \Box p$ holds.
\end{prop}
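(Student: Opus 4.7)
The plan is to apply Theorem \ref{theorem:BoxSemanticsElementaryVersion} to the geometric model $\Gamma\colon \F\to \set$, viewing $\set = [\mathbf 1,\set]$ so that $\Gamma$ is induced by the unique functor $\Sigma\to\mathbf 1$ exactly as in Theorem \ref{cor:PresheafSafetyisModal}. The preliminary check is that $i\colon \mathbf 2\to \Gamma_*\Omega_\F$ is monic, which holds provided $\Sigma$ is nonempty, since $i$ sends $0\mapsto \emptyset$ and $1\mapsto \Sigma_0$, viewing $\Gamma_*\Omega_\F$ as the lattice of subobjects of $1$ in $\F$.

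Next I would unpack the proposition $p$ carefully. By the construction in the preamble, $p\colon 1\to \Omega_\F$ in $\F$ encodes the upward-closed subset $U_p = \{v\in\Sigma_0 : pv = 1\}$, and its adjoint transpose $\hat p\colon 1\to \Gamma_*\Omega_\F$ in $\set$ picks out this same subobject. The operator $\Box$ on $\Gamma_*\Omega_\F$ is then applied to $\hat p$ and transposed back to obtain $\Box p\colon 1\to\Omega_\F$; the forcing assertion $w\Vdash_*\Box p$ is shorthand for $w\Vdash \overline{\Box\hat p}$ in $\F$ in the sense of Definition \ref{define:RelForcing}.

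With this dictionary in place, I would apply Theorem \ref{theorem:BoxSemanticsElementaryVersion} directly with $C = w$, $\phi = \hat p$ and $a$ the unique element of $1(F w) = 1(*)$. Since the action $f_!$ on the terminal copresheaf is trivial, it yields
\[
    w\Vdash_*\Box p \;\equiv\; v\Vdash_* p \text{ for every } f\colon w\to v \text{ in }\Sigma.
\]
Then I would decode the right-hand side: $v\Vdash_* p$ unwinds, via the definition of $\Vdash_*$ and the identification $\overline{\hat p} = p$, to $p_v(*) = \mfrk t_v$, which by the case formula defining $p\colon 1\to\Omega_\F$ in $\F$ holds exactly when $pv = 1$, equivalently $pv = \top$. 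This is precisely the condition from Definition \ref{DefnDecided} that $p$ be decided at $w$, completing the proof.

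The main obstacle is purely notational: keeping straight the two incarnations of $p$ (one in $\F$ as $1\to\Omega_\F$, one in $\set$ as $1\to \Gamma_*\Omega_\F$), confirming that $\Box$ in the forcing statement refers to the modality on the $\set$-level object, and checking that the quoted theorem does apply to $\Gamma$ in spite of the slightly degenerate target $\mathbf 1$. No new technical machinery is required beyond citing Theorem \ref{theorem:BoxSemanticsElementaryVersion} and the elementary computation of $i$.
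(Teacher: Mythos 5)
Your proposal is correct and follows essentially the same route as the paper: apply Theorem \ref{theorem:BoxSemanticsElementaryVersion} to the global-sections geometric morphism $\Gamma\colon \F\to\set$ to reduce $w\Vdash_*\Box p$ to $v\Vdash_* p$ for all $w\to v$, then unwind the transpose to $p(v)=\mfrk t_v$, i.e.\ $pv=\top$. Your extra care in checking that $i$ is monic (so $\Gamma$ is a geometric model) and in distinguishing the two incarnations of $p$ is a welcome tightening of details the paper leaves implicit, but it is the same argument.
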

\begin{proof}
    The computation
    \begin{align}
        w\Vdash_* \Box p &\equiv v\Vdash_* p \text{ for all } w\to v \notag \qquad & \text{(Theorem \ref{theorem:BoxSemanticsElementaryVersion})} \\
                       &\equiv v\Vdash \bar p \text{ for all } w\to v \notag \qquad & \text{(def. `$\Vdash_*$')} \\
                       &\equiv \epsilon_v p = \mfrk t_v \text{ for all } w\to v \notag \qquad & \text{(def. `$\Vdash$')} \\
                       & \equiv p(v) = \mfrk t_v \text{ for all } w\to v & \text{(constr. $\epsilon$)}\notag
    \end{align}    
    proves the result by the construction of the transpose.
\end{proof}

\section{Prospectus}

This study is not meant explicitly to advance the practical implementation of consensus protocols in the various languages in which they are written. The hope has at least been to illustrate applicability of topos theory to the description of those protocols and introduce these ideas especially to the applied category theory community. At most there is the possibility that such descriptions clarify the issues in such a way as to facilitate future developments in consensus protocol design and implementation.  

Mathematically there is work to be done mostly on the questions raised implicitly in Remark \ref{remark:SheafStuff}. This is that of whether sheaves play a role in the forcing semantics developed in that subsection. This seems likely, although somewhat awkward to work out. This is because the cosieves arising in the subobject classifier for copresheaves has played a central role. Passing to presheaves and then onto sheaves introduces contravariance, hence ordinary sieves, which eliminates the item playing the most important role in the present semantics. However, thinking of copresheaves and presheaves as forming the algebraic and geometric sides of some abstract ``Isbell Duality,'' it seems plausible that there is a way of reworking the results of this paper either to make sense for sheaves, or perhaps ``to cosheafify'' the present covariant version without passing to the explicitly geometric side of the duality.

\bibliographystyle{alpha}

\end{document}